 \newtheoremstyle{mytheorem}
 {3pt}
 {3pt}
 {\slshape}
 {}
 {\bfseries}
 {.}
 { }
 {}
\numberwithin{equation}{section}
\theoremstyle{theorem}
\newtheorem{theorem}{Theorem}[section]
\newtheorem*{theorem*}{Theorem}
\newtheorem{lemma}[theorem]{Lemma}
\providecommand{\customgenericname}{}
\newcommand{\newcustomtheorem}[2]{%
	\newenvironment{#1}[1]
	{%
		\renewcommand\customgenericname{#2}%
		\renewcommand\theinnercustomgeneric{##1}%
		\innercustomgeneric
	}
	{\endinnercustomgeneric}
}
\theoremstyle{definition}
\newtheorem*{example*}{Example}
\newtheorem{conjecture}{Conjecture}[section]
\theoremstyle{remark}
\newtheorem{remark}{Remark}[section]
\newtheorem*{remark*}{Remark}
\newtheorem*{remarks*}{Remarks}
\newtheoremstyle{named}{}{}{\itshape}{}{\bfseries}{.}{.5em}{#1\thmnote{ #3}}
\theoremstyle{named}
\newcommand{\Keywords}[1]{\ifthenelse{\isempty{#1}}{}{\smallskip \smallskip \noindent \textbf{Keywords}. #1}}
\newcommand{\MSC}[2][2020]{\ifthenelse{\isempty{#2}}{}{\smallskip \smallskip \noindent \textbf{#1MSC}. #2}}
\newcommand{\abstractnote}[1]{\ifthenelse{\isempty{#1}}{}{\smallskip \smallskip \noindent \textsuperscript{\dag}#1}}
\def\specialsection{\@startsection{section}{1}%
  \z@{\linespacing\@plus\linespacing}{.5\linespacing}%
  {\normalfont}}
\def\section{\@startsection{section}{1}%
  \z@{.7\linespacing\@plus\linespacing}{.5\linespacing}%
  {\normalfont\scshape}}
\patchcmd{\@settitle}{\uppercasenonmath\@title}{\Large\boldmath}{}{}
\patchcmd{\@settitle}{\begin{center}}{\begin{flushleft}}{}{}
\patchcmd{\@settitle}{\end{center}}{\end{flushleft}}{}{}
\patchcmd{\@setauthors}{\MakeUppercase}{\normalsize}{}{}
\patchcmd{\@setauthors}{\centering}{\raggedright}{}{}
\patchcmd{\section}{\scshape}{\large\bfseries\boldmath}{}{}
\patchcmd{\subsection}{\bfseries}{\bfseries\boldmath}{}{}
\renewcommand{\@secnumfont}{\bfseries}
\patchcmd{\@startsection}{\@afterindenttrue}{\@afterindentfalse}{}{}
\patchcmd{\abstract}{\leftmargin3pc}{\leftmargin1pc}{}{}
\def\maketitle{\par
  \@topnum\z@ 
  \@setcopyright
  \thispagestyle{empty}
  \ifx\@empty\shortauthors \let\shortauthors\shorttitle
  \else \andify\shortauthors
  \fi
  \@maketitle@hook
  \begingroup
  \@maketitle
  \toks@\@xp{\shortauthors}\@temptokena\@xp{\shorttitle}%
  \toks4{\def\\{ \ignorespaces}}
  \edef\@tempa{%
    \@nx\markboth{\the\toks4
      \@nx\MakeUppercase{\the\toks@}}{\the\@temptokena}}%
  \@tempa
  \endgroup
  \c@footnote\z@
  \@cleartopmattertags
}
\title{On a congruence involving harmonic series and Bernoulli numbers}
\author[S. Chern]{Shane Chern}
\address{Department of Mathematics and Statistics, Dalhousie University, Halifax, Nova Scotia, B3H 4R2, Canada}
\email{chenxiaohang92@gmail.com}
\date{}
\begin{document}

\maketitle

\begin{abstract}

In 2003, Zhao discovered a curious congruence involving harmonic series and Bernoulli numbers: for any odd prime $p$,
$$\sum_{\substack{i,j,k\ge 1\\\gcd(ijk,p)=1\\i+j+k=p}}\frac{1}{ijk}\equiv -2B_{p-3} \pmod{p},$$
where $B_n$ is the $n$-th Bernoulli number. This congruence was generalized by Wang and Cai in 2014, and Cai, Shen and Jia in 2017 by replacing the odd prime $p$ in the summation and modulus with an odd prime power, and a product of two odd prime powers, respectively. In particular, Cai, Shen and Jia proposed a conjectural congruence: for any positive integer $n$ with an odd prime factor $p$ such that $p^r \parallel n$ where $r\ge 1$,
$$\sum_{\substack{i,j,k\ge 1\\\gcd(ijk,n)=1\\i+j+k=n}}\frac{1}{ijk}\equiv -2B_{p-3}\cdot \frac{n}{p}\cdot \prod_{\substack{\text{prime $q\mid n$}\\q\ne p}}\left(1-\frac{2}{q}\right)\left(1-\frac{1}{q^3}\right) \pmod{p^r}.$$
In this paper, we establish the following generalization of their conjecture: for any positive integer $n$ with an odd prime factor $p$ such that $p^r \parallel n$ where $r\ge 1$,
$$\begin{aligned}
\sum_{\substack{i,j,k\ge 1\\\gcd(ijk,n)=1\\a_1 i+a_2 j+a_3 k=An}}\frac{1}{ijk}&\equiv -2B_{p-3}\cdot \frac{n}{p}\cdot \frac{Ag^3}{3}\left(\frac{1}{a_1^2 g_1^2}+\frac{1}{a_2^2 g_2^2}+\frac{1}{a_3^2 g_3^2}\right)\\
&\quad\times \prod_{\substack{\text{prime $q\mid n$}\\q\ne p}}\left(1-\frac{2}{q}\right)\left(1-\frac{1}{q^3}\right) \pmod{p^r},
\end{aligned}$$
where $a_1$, $a_2$ and $a_3$ are positive integers coprime to $p$, and $A$ is a positive common multiple of $a_1$, $a_2$ and $a_3$. Also, $g_1=\gcd(a_2,a_3)$, $g_2=\gcd(a_3,a_1)$, $g_3=\gcd(a_1,a_2)$ and $g=\gcd(a_1,a_2,a_3)$.

\Keywords{Bernoulli number, harmonic series, congruence.}

\MSC{11A07, 11A41.}
\end{abstract}

\section{Introduction}

In 2003, Zhao \cite{Zha2003} discovered a curious congruence involving harmonic series and Bernoulli numbers: for any odd prime $p$,
\begin{align*}
\sum_{\substack{i,j,k\ge 1\\\gcd(ijk,p)=1\\i+j+k=p}}\frac{1}{ijk}\equiv -2B_{p-3} \pmod{p},
\end{align*}
where $B_n$ is the $n$-th Bernoulli number defined by the exponential generating function
\begin{align*}
\frac{z}{e^z-1}=\sum_{n\ge 0}\frac{B_n z^n}{n!}.
\end{align*}
Here, $B_0=1$. Also, a theorem of von Staudt and Clausen \cite[p.~275]{Apo1976} asserts that for any positive integer $n$,
\begin{align*}
B_{2n}+\sum_{(p-1)\mid 2n}\frac{1}{p}\in\mathbb{Z},
\end{align*}
where the summation runs over primes $p$ such that $p - 1$ divides $2n$. Therefore, $B_{p-3}$ is a $p$-adic integer for all odd primes $p$.

Zhao's proof based on partial sums of a multiple zeta value series was published in \cite{Zha2007} while Ji \cite{Ji2005} presented an elementary proof slightly earlier. Meanwhile, this congruence can be refined along different directions. For example, Zhou and Cai \cite{ZC2007} generalized the triple summation to an $n$-folder summation. On the other hand, Wang and Cai \cite{WC2014}, and Cai, Shen and Jia \cite{CSJ2017} replaced the odd prime $p$ in the summation and modulus with an odd prime power, and a product of two odd prime powers, respectively. For other related papers, see \cite{MTWZ2017,SC2018,Wan2015,XC2010}.

In \cite{CSJ2017}, Cai, Shen and Jia also proposed the following conjecture.
\begin{conjecture}
	For any positive integer $n$ with an odd prime factor $p$ such that $p^r \parallel n$ (that is, $p^r\mid n$ and $p^{r+1}\nmid n$) for a certain positive integer $r$,
	\begin{align*}
	\sum_{\substack{i,j,k\ge 1\\\gcd(ijk,n)=1\\i+j+k=n}}\frac{1}{ijk}\equiv -2B_{p-3}\cdot \frac{n}{p}\cdot \prod_{\substack{\text{prime $q\mid n$}\\q\ne p}}\left(1-\frac{2}{q}\right)\left(1-\frac{1}{q^3}\right) \pmod{p^r}.
	\end{align*}
\end{conjecture}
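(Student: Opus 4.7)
My plan is to split the argument into an intermediate theorem (where the coprimality condition is only $p\nmid ijk$) and a M\"obius inversion that produces the product $\prod_q(1-2/q)(1-1/q^3)$. Let $M:=\mathrm{rad}(n)/p$ be the squarefree product of primes dividing $n$ other than $p$. Since $\gcd(ijk,n)=1$ is equivalent to $p\nmid ijk$ combined with $\gcd(i,M)=\gcd(j,M)=\gcd(k,M)=1$, applying M\"obius inversion in each of $i,j,k$ yields
\begin{align*}
\sum_{\substack{i+j+k=n\\\gcd(ijk,n)=1}}\frac{1}{ijk}=\sum_{d_1,d_2,d_3\mid M}\frac{\mu(d_1)\mu(d_2)\mu(d_3)}{d_1d_2d_3}\,T_{d_1,d_2,d_3}(n),
\end{align*}
where $T_{a_1,a_2,a_3}(N):=\sum\frac{1}{ijk}$ is taken over positive integers with $a_1i+a_2j+a_3k=N$ and $p\nmid ijk$.

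The key step is the following intermediate theorem, which I would prove next: for positive integers $a_1,a_2,a_3$ coprime to $p$, any positive common multiple $A$ of the $a_\ell$, and any $N$ with $p^r\mid N$,
\begin{align*}
T_{a_1,a_2,a_3}(AN)\equiv -\frac{2B_{p-3}}{3}\cdot\frac{N}{p}\cdot Ag^3\left(\frac{1}{a_1^2g_1^2}+\frac{1}{a_2^2g_2^2}+\frac{1}{a_3^2g_3^2}\right)\pmod{p^r},
\end{align*}
with $g=\gcd(a_1,a_2,a_3)$ and $g_\ell$ as in the abstract. The approach is to use the linear relation to write $1/(ijk)=(a_1/(jk)+a_2/(ik)+a_3/(ij))/(AN)$, reducing the triple sum to three double sums. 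Each double sum, after eliminating a variable through the constraint (introducing divisibility and $p$-coprimality conditions), is handled by partial fractions and reduced to harmonic power sums of the form $\sum_{1\le k<p^r,\,p\nmid k}k^{-2}\pmod{p^r}$, which are congruent to explicit multiples of $B_{p-3}$ by Glaisher--Lerch type congruences.

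To finish, apply the intermediate theorem to each term in the M\"obius sum with $a_\ell=d_\ell$, $A=\mathrm{lcm}(d_1,d_2,d_3)$ (which divides $M$ and hence $n$, and is coprime to $p$), and $N=n/A$; the factor $A$ in the formula cancels the $1/A$ arising from $N$, yielding
\begin{align*}
\sum_{\substack{i+j+k=n\\\gcd(ijk,n)=1}}\frac{1}{ijk}\equiv -\frac{2B_{p-3}}{3}\cdot\frac{n}{p}\cdot\sum_{d_\ell\mid M}\frac{\mu(d_1)\mu(d_2)\mu(d_3)g^3}{d_1d_2d_3}\left(\frac{1}{d_1^2g_1^2}+\frac{1}{d_2^2g_2^2}+\frac{1}{d_3^2g_3^2}\right)\pmod{p^r},
\end{align*}
where now $g=\gcd(d_1,d_2,d_3)$ and similarly for $g_\ell$. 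By the threefold symmetry of the outer sum, it equals $3\sum\mu(d_1)\mu(d_2)\mu(d_3)g^3/(d_1^3d_2d_3g_1^2)$, which is multiplicative over primes $q\mid M$; checking the eight cases $(d_1,d_2,d_3)\in\{1,q\}^3$ evaluates the local factor at each $q$ to $(1-2/q)(1-1/q^3)$, and multiplying over $q\mid M$ recovers the product in the conjecture. The main obstacle will be the intermediate theorem: the uniform case $a_1=a_2=a_3=1$ was handled by Wang--Cai and Cai--Shen--Jia, but tracking the divisibility $a_3\mid AN-a_1i-a_2j$ with arbitrary coefficients alongside $p\nmid ijk$, and isolating the leading $p$-adic contribution that yields precisely the combination $g^3\sum_\ell 1/(a_\ell^2g_\ell^2)$, is the technical heart of the argument.
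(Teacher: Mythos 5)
Your overall architecture is sound and, for the step that handles composite $n$, genuinely different from the paper's. The paper proves the conjecture (as the case $a_1=a_2=a_3=A=1$ of its Theorem \ref{th:main}) by first establishing the prime-power case with general coefficients (Theorem \ref{th:ini}) and then inducting on the number of prime factors of $n$, using the auxiliary polynomial $f(x;M,N)$, a reflection identity for the coefficient of $x^{2AN}$ (Lemma \ref{le:2AN}), and an eight-term expansion whose contributions assemble into the factor $q^s(1-2/q)(1-1/q^3)$. You instead sieve the condition $\gcd(ijk,n)=1$ directly by M\"obius inversion over the radical $M$ of $n/p^r$, which expresses the sum as $\sum_{d_1,d_2,d_3\mid M}\mu(d_1)\mu(d_2)\mu(d_3)(d_1d_2d_3)^{-1}T_{d_1,d_2,d_3}(n)$; I checked that your multiplicative evaluation is right --- the local factor at a prime $q\mid M$ from the eight choices $(d_1,d_2,d_3)\in\{1,q\}^3$ is $1-2/q-1/q^3+2/q^4=(1-2/q)(1-1/q^3)$, and the threefold symmetry argument is valid --- so the M\"obius route cleanly replaces the paper's Sections 4--5. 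It is worth noting that both routes are forced into the same generalization: the paper needs arbitrary coefficients because its induction step introduces factors of $q$ into them, while you need arbitrary (squarefree, pairwise non-coprime) coefficients because the M\"obius inversion introduces the divisors $d_\ell$. Your intermediate theorem is exactly the paper's Theorem \ref{th:ini} applied with common multiple $AN/p^r$ in place of $A$ (legitimate there, since the theorem allows any positive common multiple), so it is true; but it is also where essentially all of the paper's work lives, and your sketch of its proof --- clearing $1/(ijk)$ via the linear relation, reducing to double sums, and invoking Glaisher--Lerch type congruences --- is the same strategy as the paper's proof of Theorem \ref{th:ini-coprime}, with the delicate parts (the mod $p^{2r}$ evaluation of the double sums via Lemmas \ref{le:U}--\ref{le:mod-cp}, the separate treatment of $p=3$, and the reduction from general $a_\ell$ to pairwise coprime $b_\ell$ that produces the $g^3/(a_\ell^2g_\ell^2)$ combination) left unproved. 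So: correct and attractive restructuring of the outer layer, but the technical core is acknowledged rather than supplied.
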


The object of this paper is a generalization of their conjecture.

\begin{theorem}\label{th:main}
	Let $a_1$, $a_2$ and $a_3$ be positive integers and let $A$ be a positive common multiple of $a_1$, $a_2$ and $a_3$. Let $g_1=\gcd(a_2,a_3)$, $g_2=\gcd(a_3,a_1)$, $g_3=\gcd(a_1,a_2)$ and $g=\gcd(a_1,a_2,a_3)$. Then for any positive integer $n$ with an odd prime factor $p$ such that $p^r\parallel n$ for a certain positive integer $r$, and $p$ does not divide $a_1$, $a_2$ and $a_3$,
	\begin{align}
	\sum_{\substack{i,j,k\ge 1\\\gcd(ijk,n)=1\\a_1 i+a_2 j+a_3 k=An}}\frac{1}{ijk}&\equiv -2B_{p-3}\cdot \frac{n}{p}\cdot \frac{Ag^3}{3}\left(\frac{1}{a_1^2 g_1^2}+\frac{1}{a_2^2 g_2^2}+\frac{1}{a_3^2 g_3^2}\right)\notag\\
	&\quad\times \prod_{\substack{\textup{prime $q\mid n$}\\q\ne p}}\left(1-\frac{2}{q}\right)\left(1-\frac{1}{q^3}\right) \pmod{p^r}.
	\end{align}
\end{theorem}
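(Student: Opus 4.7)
The plan rests on the partial-fraction identity
\begin{equation*}
\frac{An}{ijk}=\frac{a_1 i+a_2 j+a_3 k}{ijk}=\frac{a_1}{jk}+\frac{a_2}{ik}+\frac{a_3}{ij},
\end{equation*}
valid on the summation locus $a_1 i+a_2 j+a_3 k=An$. Dividing by $An$ expresses the target sum as $(a_1 T_1+a_2 T_2+a_3 T_3)/(An)$, where $T_\ell$ denotes the sum of the reciprocals of the product of the two variables other than the $\ell$-th, taken over the same index set. By the symmetric role of the three variables, it suffices to evaluate (say) $T_3=\sum 1/(ij)$ modulo $p^r$; the other two sums follow by permuting indices, and re-assembling will yield the symmetric coefficient $\frac{Ag^3}{3}\bigl(1/(a_1^2 g_1^2)+1/(a_2^2 g_2^2)+1/(a_3^2 g_3^2)\bigr)$ stated in the theorem. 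A consistency check: in the unweighted case $a_1=a_2=a_3=1$, $A=1$, all three $T_\ell$ coincide and the coefficient collapses to $1$, recovering the Cai--Shen--Jia conjecture.

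To compute $T_3$, I would eliminate $k=(An-a_1 i-a_2 j)/a_3$, turning $T_3$ into a double sum over pairs $(i,j)$ with $i,j\ge 1$, $a_1 i+a_2 j<An$, $a_1 i+a_2 j\equiv 0\pmod{a_3}$, and $\gcd(ij,n)=\gcd(k,n)=1$. Writing $a_1=g_3\alpha_1$ and $a_2=g_3\alpha_2$ with $g_3=\gcd(a_1,a_2)$ and $\gcd(\alpha_1,\alpha_2)=1$, the congruence reduces to one modulo $a_3/\gcd(a_3,g_3)$ on $(i,j)$, parametrising the admissible pairs by a single residue class with period roughly $a_3/g$, where $g=\gcd(a_1,a_2,a_3)$. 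I would then strip the three coprimality constraints by Mobius inversion over the divisors of $m:=n/p^r$, reducing to unrestricted double sums in arithmetic progressions modulo $p^r$. Partial-fractioning $1/(ij)$ against the linear constraint among $i$ and $j$ then collapses each inner sum to a single sum $\sum 1/i$ over $i\in[1,p^r]$ coprime to $p$ in arithmetic progressions, whose evaluation modulo $p^r$ is furnished by the classical Glaisher and Emma Lehmer congruences and injects the factor $B_{p-3}$.

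The chief obstacle is bookkeeping in two parallel directions. On the one hand, the Mobius divisor sums over $d\mid m$ must collapse into the Euler-type product $\prod_{q\mid n,\,q\ne p}(1-2/q)(1-1/q^3)$ exactly as in the Cai--Shen--Jia argument; on the other hand, the weights introduced by the congruence modulo $a_3$ in the analysis of $T_3$ naturally yield a coefficient of shape $g^3/(a_3^2 g_3^2)$ for $a_3 T_3/(An)$, and only after combining the three $T_\ell$ contributions does the fully symmetric expression $\frac{Ag^3}{3}\sum_\ell 1/(a_\ell^2 g_\ell^2)$ emerge; verifying this collapse requires a careful analysis of how $\gcd(a_\ell,g_\ell)$ interacts with $g$. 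A final point of care is the positivity constraint $k\ge 1$ (equivalently $a_1 i+a_2 j<An$) when converting to sums over complete residue systems modulo $p^r$: in the unweighted case it is absorbed by the symmetry $i\leftrightarrow n-i$, but the weighted analogue of this pairing must respect the linear constraint, and this is the step where the bulk of the delicate arithmetic is likely to concentrate.
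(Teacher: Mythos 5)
Your opening move---multiplying by $(a_1i+a_2j+a_3k)/(An)$ to split the sum into double sums $T_\ell$---is exactly the paper's starting point for the prime-power case, and your Möbius-inversion plan for composite $n$ is a legitimate alternative to the paper's route (the paper instead runs an induction on the prime factors of $n$ using the generating polynomial $f(x;M,N)=\sum_{\gcd(k,N)=1}x^k/k$ and the reflection $[x^{2AN}]\equiv-[x^{AN}]$, which sidesteps the divisor-sum bookkeeping entirely). But the sketch has genuine gaps at the two places where the actual content lives. First, after eliminating $k$ there is \emph{no} linear constraint between $i$ and $j$ to partial-fraction against---only the congruence $a_1i+a_2j\equiv 0\pmod{a_3}$ and an inequality---so your claimed collapse of $T_3$ to single harmonic sums in arithmetic progressions does not happen. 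The objects you are forced to evaluate are double sums $\sum 1/(ij)$ over $c_1i\equiv c_2j\pmod{c_3}$ with $p\nmid c_1i-c_2j$, and the Bernoulli number enters through their ``mod $c_3p$'' piece: after fixing residues one lands on $\sum_{k}1/(k\,h(k))$ with $h(k)\equiv a\overline{b}k\pmod{cp}$, whose evaluation needs not only Glaisher's congruence for $\sum 1/k^2\pmod{p^2}$ but also Porubsk\'y's congruence $\sum_k k^{p-4}\lfloor ak/(cp)\rfloor\equiv\frac{a^3-a}{3}B_{p-3}\pmod p$. That floor-sum input is precisely what produces the weight-dependent coefficient for general $a_i$ and is absent from your plan; ``Glaisher and Lehmer'' alone will not supply it. You also omit the $p$-adic descent from range $cp^r$ at precision $p^{2r}$ down to range $cp$ at precision $p^2$ (the paper's Lemma on $U(s;u,p^r)$), and the degenerate case $p=3$, which requires a separate computation. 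A smaller slip: the term $a_3T_3/(An)$ does not contribute $g^3/(a_3^2g_3^2)$ but rather the two terms with indices $1$ and $2$ (each $T_\ell$ splits further into two double sums with congruences modulo $a_{\ell'}$, $\ell'\ne\ell$); the symmetrized total is the same, but your stated attribution would mislead the bookkeeping.

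Second, the Möbius step is asserted rather than established, and for general weights it is not routine. Writing $m=n/p^r$ and inverting the conditions $\gcd(ijk,m)=1$ replaces $(a_1,a_2,a_3)$ by $(a_1d_1,a_2d_2,a_3d_3)$ for divisors $d_i\mid m$, so the prime-power theorem must be applied with the new gcd data $\gcd(a_2d_2,a_3d_3)$, etc., which does not factor as $\gcd(a_2,a_3)\gcd(d_2,d_3)$ when the $a_i$ share prime factors with $m$ (which the theorem permits). One can check that in the unweighted case the resulting triple divisor sum is multiplicative and its local factor at $q$ is $1-2/q-1/q^3+2/q^4=(1-2/q)(1-1/q^3)$, so your route does close the Cai--Shen--Jia case; but in the weighted case the identity you need---that the local factor is still $(1-2/q)(1-1/q^3)$ times the unperturbed coefficient, uniformly in how $q$ divides the $a_i$---is a nontrivial computation you have not performed. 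This is exactly the difficulty the paper's polynomial induction is designed to avoid: there the passage from $n$ to $q^sn$ only requires the inductive hypothesis with weights $a_i$ or $a_iq$, and the eight resulting terms combine to $q^s(1-2/q)(1-1/q^3)$ by a two-line calculation.
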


\begin{remark}
	Notice that in Theorem \ref{th:main}, we always have that
	$$\frac{Ag^3}{3}\left(\frac{1}{a_1^2 g_1^2}+\frac{1}{a_2^2 g_2^2}+\frac{1}{a_3^2 g_3^2}\right)=\frac{Ag^3}{a_1^2a_2^2a_3^2g_1^2g_2^2g_3^2}\cdot \frac{a_1^2a_2^2g_1^2g_2^2+a_2^2a_3^2g_2^2g_3^2+a_3^2a_1^2g_3^2g_1^2}{3}$$
	is a $p$-adic integer. Recall that $p$ does not divide $a_1$, $a_2$ and $a_3$ (and therefore $g_1$, $g_2$ and $g_3$). When $p\ge 5$, the above claim is trivial. When $p=3$, we know that $x^2\equiv 1 \pmod{3}$ for any $x$ not divisible by $3$, and thus, $3$ divides $a_1^2a_2^2g_1^2g_2^2+a_2^2a_3^2g_2^2g_3^2+a_3^2a_1^2g_3^2g_1^2$, which confirms the claim.
\end{remark}

\section{Lemmas}

\begin{lemma}\label{le:arith-prog-p-r}
	Let $p$ be an odd prime and $r$ be a positive integer. Let $u$ be a positive integer. If $m$ is a positive integer not divisible by $p$, then for any integer $\ell$,
	\begin{align}
	\sum_{\substack{1\le i\le u m p^r\\i\equiv \ell \ (\operatorname{mod}\;m)\\\gcd(i,p)=1}}\frac{1}{i}\equiv 0 \pmod{p^r}.
	\end{align}
\end{lemma}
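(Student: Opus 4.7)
The plan is to reduce the sum, modulo $p^r$, to the ``standard'' harmonic-type sum over the units modulo $p^r$, and then to evaluate that sum by the classical involution $k \leftrightarrow p^r - k$. First I would break $[1, ump^r]$ into $u$ consecutive blocks of length $mp^r$. Writing a typical $i$ in the $k$-th block as $i = (k-1)mp^r + i'$ with $1 \leq i' \leq mp^r$, the conditions $i \equiv \ell \pmod{m}$ and $\gcd(i,p) = 1$ transfer to $i'$, since $mp^r$ is divisible by both $m$ and $p$. Because $i'$ is a $p$-adic unit, the geometric expansion of $\frac{1}{(k-1)mp^r + i'} = \frac{1}{i'}\cdot\frac{1}{1+(k-1)mp^r/i'}$ gives
\[
\frac{1}{(k-1)mp^r + i'} \equiv \frac{1}{i'} \pmod{p^r},
\]
so the full sum is congruent modulo $p^r$ to $u$ times the inner sum over a single block, and it suffices to treat the case $u = 1$.

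Next, with $u = 1$, I would parameterize $i' = \ell + jm$ for $j \in \{0, 1, \ldots, p^r - 1\}$, after reducing $\ell$ modulo $m$. Since $\gcd(m, p^r) = 1$, the map $j \mapsto (\ell + jm) \bmod p^r$ is a bijection of $\{0, 1, \ldots, p^r - 1\}$ that restricts to a bijection between the $j$'s with $p \nmid \ell + jm$ and the residues $k \in \{1, \ldots, p^r - 1\}$ with $\gcd(k, p) = 1$. Moreover, since the difference of $1/i'$ and $1/(i' \bmod p^r)$ has a numerator divisible by $p^r$ and a denominator coprime to $p$, we have $1/i' \equiv 1/((\ell+jm) \bmod p^r) \pmod{p^r}$. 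The inner sum therefore reduces modulo $p^r$ to
\[
\sum_{\substack{1 \leq k \leq p^r - 1 \\ \gcd(k, p) = 1}} \frac{1}{k}.
\]

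Finally, I would pair each such $k$ with $p^r - k$. The involution $k \leftrightarrow p^r - k$ preserves the coprimality condition and has no fixed point, since $p^r$ is odd. Pairing the terms yields
\[
\sum_{\substack{1 \leq k \leq p^r - 1 \\ \gcd(k, p) = 1}} \frac{1}{k} = \sum_{\{k,\,p^r - k\}} \frac{p^r}{k(p^r - k)} \equiv 0 \pmod{p^r},
\]
each summand on the right lying in $p^r\mathbb{Z}_p$ since its denominator is a $p$-adic unit. No step presents a real obstacle; the only care needed is to verify that the geometric expansion in the first step truncates at zeroth order modulo $p^r$, which is immediate because $i'$ is a unit in $\mathbb{Z}_p$.
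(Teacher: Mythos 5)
Your proof is correct and follows essentially the same route as the paper: reduce to a single block of length $mp^r$, observe that the admissible $i$ in that block run over a complete residue system modulo $p^r$ (hence over the units once $p\nmid i$ is imposed), and kill the resulting harmonic sum by the pairing $k\leftrightarrow p^r-k$. Your version just spells out the block reduction and the bijection a little more explicitly than the paper does.
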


\begin{proof}
	We start by noticing that
	\begin{align*}
	\{i\ (\operatorname{mod}\;p^r):1\le i\le m p^r\text{ and }i\equiv \ell \ (\operatorname{mod}\;m)\}
	\end{align*}
	equals $\mathbb{Z}/p^r\mathbb{Z}$. This is because if $i_1$ and $i_2$ simultaneously satisfy $i_1\equiv i_2 \pmod{p^r}$ and $i_1\equiv i_2\equiv \ell \pmod{m}$, then they differ by a multiple of $mp^r$ since $p$ is coprime to $m$. Therefore, modulo $p^r$,
	\begin{align*}
	\sum_{\substack{1\le i\le u m p^r\\i\equiv \ell \ (\operatorname{mod}\;m)\\\gcd(i,p)=1}}\frac{1}{i}\equiv u \sum_{\substack{1\le i\le m p^r\\i\equiv \ell \ (\operatorname{mod}\;m)\\\gcd(i,p)=1}}\frac{1}{i}\equiv u \sum_{\substack{1\le i\le p^r\\\gcd(i,p)=1}}\frac{1}{i}= \frac{u}{2}\sum_{\substack{1\le i\le p^r-1\\\gcd(i,p)=1}}\left(\frac{1}{i}+\frac{1}{p^r-i}\right)\equiv 0.
	\end{align*}
	This is our desired result.
\end{proof}

\begin{lemma}\label{le:mod-c}
	Let $p$ be an odd prime and $r$ be a positive integer. Let $u$ and $v$ be positive integers. If $c$ is a positive integer not divisible by $p$, then for any positive integers $a$ and $b$ coprime to $c$,
	\begin{align}
	\sum_{\substack{1\le i\le u cp^r\\1\le j\le v cp^r\\ai\equiv bj \ (\operatorname{mod}\;c)\\\gcd(ij,p)=1}}\frac{1}{ij}\equiv 0 \pmod{p^{2r}}.
	\end{align}
\end{lemma}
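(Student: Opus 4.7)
The plan is to mimic the pairing argument underlying Lemma~\ref{le:arith-prog-p-r}, but now apply it simultaneously in both $i$ and $j$ so that each pairing contributes a factor of $p^r$, combining into $p^{2r}$.

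First I would construct an involution $i\mapsto i^{*}$ on the set of $i\in[1,ucp^r]$ with $\gcd(i,p)=1$. For each such $i$, write $i=kcp^r+s$ with $k\in\{0,\dots,u-1\}$ and $s\in\{1,\dots,cp^r\}$; since $\gcd(c,p)=1$, the Chinese remainder theorem yields a unique $s^{*}\in[1,cp^r]$ with $s^{*}\equiv s\pmod c$ and $s^{*}\equiv -s\pmod{p^r}$, and I set $i^{*}:=kcp^r+s^{*}$. Define $j\mapsto j^{*}$ analogously on $[1,vcp^r]$. The routine verifications are that these maps are genuine involutions, preserve coprimality to $p$, satisfy $i^{*}\equiv i\pmod c$ together with $i+i^{*}\equiv 0\pmod{p^r}$ (and likewise for $j$), and are fixed-point free, since $i=i^{*}$ would force $2s\equiv 0\pmod{p^r}$, which is impossible for odd $p$ given $p\nmid s$.

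Because the congruence $ai\equiv bj\pmod c$ depends only on the residues of $i$ and $j$ modulo $c$, it is invariant under the two involutions independently. The index set of the sum therefore partitions into disjoint quadruples $\{(i,j),(i^{*},j),(i,j^{*}),(i^{*},j^{*})\}$, and on each such quadruple the contribution factors as
\[
\left(\frac{1}{i}+\frac{1}{i^{*}}\right)\left(\frac{1}{j}+\frac{1}{j^{*}}\right)=\frac{i+i^{*}}{i\,i^{*}}\cdot\frac{j+j^{*}}{j\,j^{*}}.
\]
Since $p^r\mid(i+i^{*})$ and $p^r\mid(j+j^{*})$ while $i\,i^{*}j\,j^{*}$ is coprime to $p$, each quadruple contributes a fraction whose numerator is divisible by $p^{2r}$ over a $p$-coprime denominator. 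Summing over quadruples then gives the claim.

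The main obstacle is simply setting up the $i^{*}$-construction correctly across the $u$ blocks of length $cp^r$ inside $[1,ucp^r]$ and confirming the absence of fixed points together with the preservation of the side conditions of the sum; once the pairing is in place, the factored identity above makes the two independent factors of $p^r$ multiply into the required $p^{2r}$ automatically.
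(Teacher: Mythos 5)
Your argument is correct. The paper reaches the same conclusion more quickly by splitting the double sum over the common residue $\ell\equiv ai\equiv bj\pmod c$, factoring it as $\sum_{\ell=1}^{c}\bigl(\sum_i \tfrac1i\bigr)\bigl(\sum_j \tfrac1j\bigr)$, and invoking Lemma~\ref{le:arith-prog-p-r} twice to see that each one-dimensional factor vanishes modulo $p^r$; you instead build the fixed-point-free involution $s\mapsto s^{*}$ (with $s^{*}\equiv s\pmod c$ and $s^{*}\equiv -s\pmod{p^r}$) explicitly and group the index pairs into quadruples. The underlying cancellation is the same --- it is exactly the $i\leftrightarrow p^r-i$ pairing hidden inside the proof of Lemma~\ref{le:arith-prog-p-r}, applied once in each variable --- so your route is essentially an unfactored, self-contained version of the paper's. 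What it buys is independence from Lemma~\ref{le:arith-prog-p-r} and a transparent reason why the two factors of $p^r$ multiply; what it costs is the bookkeeping of the CRT involution across the $u$ (resp.\ $v$) blocks of length $cp^r$, which the factorization renders invisible. All the verifications you flag do go through: the map is a genuine involution on each block, it preserves $\gcd(\cdot,p)=1$ and the condition $ai\equiv bj\pmod c$, one has $p^r\mid i+i^{*}$ since $i+i^{*}=2kcp^r+s+s^{*}$, and the only delicate point --- absence of fixed points --- holds because $i=i^{*}$ would force $p^r\mid 2s$, impossible for odd $p$ with $p\nmid s$.
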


\begin{proof}
	We have
	\begin{align*}
	\sum_{\substack{1\le i\le u cp^r\\1\le j\le v cp^r\\ai\equiv bj \ (\operatorname{mod}\;c)\\\gcd(ij,p)=1}} = \sum_{\ell=1}^c \sum_{\substack{1\le i\le u cp^r\\ai\equiv \ell \ (\operatorname{mod}\;c)\\\gcd(i,p)=1}}\frac{1}{i}\sum_{\substack{1\le j\le v cp^r\\bj\equiv \ell \ (\operatorname{mod}\;c)\\\gcd(j,p)=1}}\frac{1}{j}.
	\end{align*}
	Since $a$ is coprime to $c$, we know that it is invertible modulo $c$. Let $\overline{a}$ be such that $\overline{a}a\equiv 1\pmod{c}$. Then
	\begin{align*}
	\sum_{\substack{1\le i\le u cp^r\\ai\equiv \ell \ (\operatorname{mod}\;c)\\\gcd(i,p)=1}}\frac{1}{i} = \sum_{\substack{1\le i\le u cp^r\\i\equiv \overline{a}\ell \ (\operatorname{mod}\;c)\\\gcd(i,p)=1}}\frac{1}{i}\equiv 0 \pmod{p^r},
	\end{align*}
	with Lemma \ref{le:arith-prog-p-r} applied. Similarly, since $b$ and $c$ are coprime, we have
	\begin{align*}
	\sum_{\substack{1\le j\le v cp^r\\bj\equiv \ell \ (\operatorname{mod}\;c)\\\gcd(j,p)=1}}\frac{1}{j}\equiv 0 \pmod{p^r}.
	\end{align*}
	Therefore, the lemma follows.
\end{proof}

\begin{lemma}\label{le:U}
	Let $p$ be an odd prime and $r$ be a positive integer. For any positive integer $u$ and integer $s$ not divisible by $p$, we define
	\begin{align*}
	U(s;u,p^r):=\sum_{k=0}^{up^{r-1}-1}\frac{1}{kp+s}.
	\end{align*}
	Then
	\begin{enumerate}[label={\textup{(\alph*).}},leftmargin=*,labelsep=0cm,align=left]
		\item 
		$U(s;u,p^2)\equiv pU(s;u,p) \pmod{p^2}$ and for $r\ge 2$, $U(s;u,p^{r+1})\equiv pU(s;u,p^r)\pmod{p^{r+2}}$;
		
		\item 
		for $r\ge 1$, $U(s;u,p^r)\equiv 0\pmod{p^{r-1}}$;
		
		\item 
		for $r\ge 2$, $U(s;u,p^{r+1})U(t;v,p^{r+1})\equiv p^2U(s;u,p^{r})U(t;v,p^{r}) \pmod{p^{2r+2}}$;
		
		\item 
		for $r\ge 2$,
		\begin{align*}
		&U(s;u,p^r)U(t;v,p^r)\\
		&\quad \equiv\begin{cases}
		uvs^{-1}t^{-1}p^{2r-2}+\frac{uvs^{-1}t^{-3}(t+1)+uvs^{-3}(s+1)t^{-1}}{2}p^{2r-1} & \text{if $p=3$}\\[8pt]
		uvs^{-1}t^{-1}p^{2r-2}+\frac{uvs^{-1}t^{-2}+uvs^{-2}t^{-1}}{2}p^{2r-1} & \text{if $p\ge 5$}
		\end{cases}  \pmod{p^{2r}}.
		\end{align*}
	\end{enumerate}
\end{lemma}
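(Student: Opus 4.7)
The plan is to prove parts (a)--(d) in order, with each feeding into the next.

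For part (a), I split $U(s;u,p^{r+1})=\sum_{k=0}^{up^r-1}\frac{1}{kp+s}$ by writing $k=j+up^{r-1}l$ with $0\le j\le up^{r-1}-1$ and $0\le l\le p-1$, giving $kp+s=(jp+s)+up^r l$. Since $jp+s$ is a $p$-adic unit, a geometric expansion and summation in $l$ yields
$$U(s;u,p^{r+1}) = pU(s;u,p^r)-\frac{up^{r+1}(p-1)}{2}\sum_{j=0}^{up^{r-1}-1}\frac{1}{(jp+s)^2}+(\text{contributions from $t\ge 2$}).$$
Since $jp+s\equiv s\pmod{p}$, we have $\sum_j\frac{1}{(jp+s)^2}\equiv\frac{up^{r-1}}{s^2}\pmod{p}$, which vanishes modulo $p$ exactly when $r\ge 2$; this forces the $t=1$ contribution to be divisible by $p^{r+2}$. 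A short check confirms that each $t\ge 2$ term also has valuation at least $r+2$ (using that $(up^r)^t$ already has valuation $rt\ge 2r\ge r+2$ for $r\ge 2$). When $r=1$, all terms with $t\ge 1$ are automatically divisible by $p^2$, handling that case separately.

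Part (b) then follows by an immediate induction on $r$: the base $r=1$ is trivial, and the step reads $U(s;u,p^{r+1})\equiv pU(s;u,p^r)\equiv 0\pmod{p^r}$ by (a). Part (c) follows by writing $U(s;u,p^{r+1})=pU(s;u,p^r)+E_s$ and $U(t;v,p^{r+1})=pU(t;v,p^r)+E_t$ with $E_s,E_t$ divisible by $p^{r+2}$ by (a); expanding the product and using that $U(s;u,p^r)$ and $U(t;v,p^r)$ are divisible by $p^{r-1}$ by (b), each of the three cross-terms has valuation at least $2r+2$.

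Part (d) is the main obstacle. I first establish the base case $r=2$ by expanding $\frac{1}{kp+s}=\sum_{t\ge 0}\frac{(-kp)^t}{s^{t+1}}$ and reducing modulo $p^3$; only $t=0,1,2$ survive (terms with $t\ge 3$ carry a factor $p^t$ and an integer coefficient). The $t=0,1$ contributions give $\frac{up}{s}+\frac{up^2}{2s^2}$ in all cases. The $t=2$ contribution involves $\sum_{k=0}^{up-1}k^2=\frac{up(up-1)(2up-1)}{6}$, whose $p$-adic valuation is $1$ when $p\ge 5$ (so after multiplication by $p^2/s^3$ it vanishes modulo $p^3$), but $0$ when $p=3$, because the $6$ absorbs a factor of $p$; a short reduction modulo $p$ then produces the extra term $\frac{up^2}{2s^3}$. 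Combining yields
$$U(s;u,p^2)\equiv\frac{up}{s}+\frac{up^2}{2s^2}\pmod{p^3}\ (p\ge 5),\quad U(s;u,p^2)\equiv\frac{up}{s}+\frac{u(s+1)p^2}{2s^3}\pmod{p^3}\ (p=3).$$
Multiplying by the analogous expression in $(t,v)$ and retaining terms of valuation $\le 3$ reproduces the claimed formula at $r=2$. The induction from $r$ to $r+1$ is then immediate from (c), since multiplication by $p^2$ shifts $p^{2r-2}\mapsto p^{2r}$ and $p^{2r-1}\mapsto p^{2r+1}$ and thus preserves the shape of the formula.
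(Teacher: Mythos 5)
Your proposal is correct and follows essentially the same route as the paper: the same decomposition $k=j+up^{r-1}l$ for (a), induction via (a) for (b), reduction of (d) to the base case $r=2$ via (c), and explicit evaluation of $U(s;u,p^2)$ modulo $p^3$ through the power sums $\sum k$ and $\sum k^2$, with the $p=3$ anomaly arising from the factor $6$ in $\sum k^2$ exactly as in the paper. The only differences are cosmetic: you prove (c) by directly expanding $(pU+E_s)(pV+E_t)$ where the paper uses a polarization identity, and in (d) you expand $1/(kp+s)$ as a $p$-adic geometric series where the paper invokes Euler's theorem to write it as $(kp+s)^{\phi(p^4)-1}$ before expanding binomially.
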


\begin{proof}
	We follow the proof of \cite[Lemma 3]{WC2014}.
	
	(a). For $r\ge 1$,
	\begin{align*}
	U(s;u,p^{r+1})-pU(s;u,p^r)&=\sum_{k=0}^{up^r-1}\frac{1}{kp+s}-p\sum_{k=0}^{up^{r-1}-1}\frac{1}{kp+s}\\
	&=\sum_{i=0}^{p-1}\sum_{j=0}^{up^{r-1}-1}\frac{1}{(iup^{r-1}+j)p+s}-\sum_{i=0}^{p-1}\sum_{j=0}^{up^{r-1}-1}\frac{1}{jp+s}\\
	&=-up^r\sum_{i=0}^{p-1}\sum_{j=0}^{up^{r-1}-1}\frac{i}{\big((iup^{r-1}+j)p+s\big)\big(jp+s\big)}.
	\end{align*}
	When $r=1$, we have
	\begin{align*}
	U(s;u,p^{2})-pU(s;u,p)&\equiv -up \sum_{i=0}^{p-1}\sum_{j=0}^{u-1} \frac{i}{s^2}\equiv 0 \pmod{p^2}.
	\end{align*}
	When $r\ge 2$, we have
	\begin{align*}
	U(s;u,p^{r+1})-pU(s;u,p^r)&\equiv -up^r\sum_{i=0}^{p-1}\sum_{j=0}^{up^{r-1}-1}\frac{i}{(jp+s)^2}\\
	&=-\frac{u(p-1)p^{r+1}}{2}\sum_{j=0}^{up^{r-1}-1}\frac{1}{(jp+s)^2}\\
	&\equiv 0 \pmod{p^{r+2}}.
	\end{align*}
	
	(b). This is a direct consequence of Part (a) by induction on $r$.
	
	(c). Notice that
	\begin{align*}
	&2\big(U(s;u,p^{r+1})U(t;v,p^{r+1})-p^2U(s;u,p^{r})U(t;v,p^{r})\big)\\
	&\qquad=\big(U_+(s;u)+U_+(t;v)\big)\big(U_-(s;u)+U_-(t;v)\big)\\
	&\qquad\quad-U_+(s;u)U_-(s;u)-U_+(t;v)U_-(t;v),
	\end{align*}
	where for $(\rho,\mu)=(s,v)$ or $(t,v)$,
	\begin{align*}
	U_\pm(\rho;\mu):=U(\rho;\mu,p^{r+1})\pm pU(\rho;\mu,p^{r}).
	\end{align*}
	Further, by Part (a), $U_-(\rho;\mu)\equiv 0 \pmod{p^{r+2}}$. Also, by Part (b), $U_+(\rho;\mu)\equiv 0 \pmod{p^{r}}$. The desired result therefore follows.
	
	(d). It suffices to prove the case $r=2$; the rest follows by induction on $r$ with Part (c) applied. When $r=2$, we deduce from Euler's theorem that
	\begin{align*}
	U(s;u,p^2)=\sum_{k=0}^{up-1}\frac{1}{kp+s}\equiv \sum_{k=0}^{up-1} (kp+s)^m \pmod{p^4},
	\end{align*}
	where $m=\phi(p^4)-1=p^4-p^3-1$ with $\phi$ Euler's totient function. Expanding the powers in the above and reducing with modulus $p^4$, we have
	\begin{align*}
	U(s;u,p^2)\equiv\begin{cases}
	us^mp+\dfrac{us^{m-2}(s+1)}{2}p^2-\dfrac{u^2s^{m-1}}{2}p^3 & \text{if $p=3$}\\[8pt]
	us^m p+\dfrac{us^{m-1}}{2}p^2+\dfrac{us^{m-2}-3u^2s^{m-1}}{6}p^3 & \text{if $p\ge 5$}
	\end{cases} \pmod{p^4}.
	\end{align*}
	It follows that when $p=3$,
	\begin{align*}
	&U(s;u,3^2)U(t;v,3^2)\\
	&\qquad\equiv uvs^mt^m3^2+\frac{uvs^mt^{m-2}(t+1)+uvs^{m-2}(s+1)t^m}{2}3^3\\
	&\qquad\equiv uvs^{-1}t^{-1}3^2+\frac{uvs^{-1}t^{-3}(t+1)+uvs^{-3}(s+1)t^{-1}}{2}3^3 \pmod{3^4},
	\end{align*}
	and when $p\ge 5$,
	\begin{align*}
	U(s;u,p^2)U(t;v,p^2)&\equiv uvs^mt^mp^2+\frac{uvs^mt^{m-1}+uvs^{m-1}t^m}{2}p^3\\
	&\equiv uvs^{-1}t^{-1}p^2+\frac{uvs^{-1}t^{-2}+uvs^{-2}t^{-1}}{2}p^3 \pmod{p^4},
	\end{align*}
	which is exactly what we want.
\end{proof}

\begin{lemma}\label{le:h(k)}
	Let $p$ be an odd prime. Let $c$ be a positive integer not divisible by $p$. For each positive integer $k$ with $\gcd(k,p)=1$, if we denote by $h(k)$ the unique integer such that $1\le h(k)\le cp-1$ and $h(k)\equiv ak \pmod{cp}$ where $a$ is a fixed integer that is coprime to $c$ and $p$, then
	\begin{align}
	\sum_{\substack{k=1\\\gcd(k,p)=1}}^{cp-1} \frac{1}{k\cdot h(k)} \equiv\begin{cases}
	-\dfrac{c}{a^3} & \text{if $p=3$}\\[8pt]
	\dfrac{c(a^2+1)}{3a}pB_{p-3} & \text{if $p\ge 5$}
	\end{cases} \pmod{p^2}.
	\end{align}
\end{lemma}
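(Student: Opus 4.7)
The plan is to $p$-adically expand $1/h(k)$, splitting the sum into a quadratic piece (to be computed modulo $p^2$) and a ``floor function'' piece (to be computed modulo $p$), and then evaluate the latter via Bernoulli polynomial identities.

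Writing $h(k) = ak - cp\, m_k$, where $m_k \in \mathbb{Z}_{\ge 0}$ is uniquely determined by $h(k) \in [1, cp-1]$, a geometric series expansion gives
\[
\frac{1}{k\, h(k)} \equiv \frac{1}{a k^2} + \frac{cp\, m_k}{a^2 k^3} \pmod{p^2}.
\]
So the task reduces to evaluating $T_2 := \sum_k 1/k^2$ modulo $p^2$ and $T := \sum_k m_k / k^3$ modulo $p$, with $k$ running over $1 \le k \le cp-1$ coprime to $p$. Parameterizing $k = jp + i$ ($0 \le j \le c-1$, $1 \le i \le p-1$) and using $(jp+i)^{-2} \equiv 1/i^2 - 2jp/i^3 \pmod{p^2}$, the cross term collapses modulo $p^2$ thanks to $\sum_{i=1}^{p-1} 1/i^3 \equiv 0 \pmod{p}$ (pair $i \leftrightarrow p-i$). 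Thus $T_2 \equiv c\sum_{i=1}^{p-1} 1/i^2 \pmod{p^2}$; for $p \ge 5$ I would quote Glaisher's congruence $\sum_{i=1}^{p-1} 1/i^2 \equiv \tfrac{2p}{3} B_{p-3} \pmod{p^2}$, while for $p = 3$ direct evaluation gives $5/4 \equiv -1 \pmod{9}$.

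For $T$, the same parameterization combined with $ai = b_i p + r_i$ (with $b_i = \lfloor ai/p \rfloor$ and $r_i \in [1, p-1]$) gives $m_k = \lfloor (aj + b_i)/c \rfloor$. Since $\gcd(a, c) = 1$, the $j$-sum evaluates by the elementary identity $\sum_{j=0}^{c-1} \lfloor (aj + b)/c \rfloor = \tfrac{(a-1)(c-1)}{2} + b$, and the constant term dies under $\sum_{i=1}^{p-1} 1/i^3 \equiv 0 \pmod{p}$. Consequently
\[
T \equiv W(a) := \sum_{i=1}^{p-1} \frac{\lfloor ai/p \rfloor}{i^3} \pmod{p},
\]
independently of $c$. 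The crux of the proof is the identity $W(a) \equiv \tfrac{a(a^2-1)}{3} B_{p-3} \pmod{p}$. The route I would take is: rewrite $\lfloor ai/p \rfloor = \#\{j \in [1, a-1] : i \ge \lceil jp/a \rceil\}$, interchange summations, and substitute $i \mapsto p - i$ to arrive at $W(a) \equiv -\sum_{j=1}^{a-1} H^{(3)}_{\lfloor jp/a \rfloor} \pmod{p}$ with $H^{(3)}_n := \sum_{i=1}^n 1/i^3$; then apply Bernoulli's power sum formula and $i^{p-4} \equiv 1/i^3 \pmod{p}$ to get $H^{(3)}_n \equiv -\tfrac{1}{3}\bigl(B_{p-3}(n+1) - B_{p-3}\bigr) \pmod{p}$; then note that $\lceil jp/a \rceil = jp/a + (a - r_j)/a$ with $r_j := jp \bmod a$, and since $jp/a \in p\mathbb{Z}_p$ while the Bernoulli polynomial values at $(a - r_j)/a$ are $p$-adic integers (by von Staudt--Clausen), one has $B_{p-3}(\lceil jp/a \rceil) \equiv B_{p-3}((a - r_j)/a) \pmod{p}$; finally, as $j$ runs over $[1, a-1]$, $(a - r_j)/a$ permutes $\{1/a, \ldots, (a-1)/a\}$, so the multiplication formula $\sum_{s=0}^{a-1} B_{p-3}(s/a) = a^{4-p} B_{p-3} \equiv a^3 B_{p-3} \pmod{p}$ collapses the sum.

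Putting the pieces together and using $2 + (a^2 - 1) = a^2 + 1$ yields $S \equiv \tfrac{c(a^2+1)}{3a}\, p B_{p-3} \pmod{p^2}$ when $p \ge 5$. For $p = 3$, the same identity for $W(a)$ with $B_0 = 1$ gives $S \equiv \tfrac{c(a^2-2)}{a} \pmod{9}$, and rewriting using $(a^2-1)^2 \equiv 0 \pmod 9$ (valid whenever $\gcd(a, 3) = 1$, since then $a^2 \equiv 1 \pmod{3}$) produces $S \equiv -c/a^3 \pmod{9}$. The main obstacle is the Bernoulli polynomial analysis inside $W(a)$, which transfers floor function data to values of $B_{p-3}$ at rationals $s/a$ and then collapses them via the multiplication formula.
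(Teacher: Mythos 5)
Your skeleton is exactly the paper's: expand $1/h(k)$ $p$-adically (the paper does this via Euler's theorem, you via a geometric series --- same expansion) into $\frac{1}{ak^2}+\frac{cp}{a^2}\cdot\frac{\lfloor ak/(cp)\rfloor}{k^3}$, then evaluate the quadratic sum modulo $p^2$ and the floor sum modulo $p$. Where you genuinely diverge is in how the two ingredient congruences are obtained: the paper simply cites Hong for $\sum_{\gcd(k,p)=1}1/k^2\equiv\frac{2c}{3}pB_{p-3}\pmod{p^2}$ and Porubsk\'y for $\sum 1/k^3\lfloor ak/(cp)\rfloor\equiv\frac{a^3-a}{3}B_{p-3}\pmod p$, whereas you reduce the former to the classical $c=1$ Glaisher congruence by writing $k=jp+i$, and you give a self-contained proof of the latter: first collapse the $j$-sum with the elementary identity $\sum_{j=0}^{c-1}\lfloor(aj+b)/c\rfloor=\frac{(a-1)(c-1)}{2}+b$ to reduce to $W(a)=\sum_{i=1}^{p-1}i^{-3}\lfloor ai/p\rfloor$, then evaluate $W(a)$ via partial power sums, $p$-adic continuity of $B_{p-3}(x)$, and the multiplication formula. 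I checked these steps; for $p\ge 5$ they are sound and reproduce the paper's constants, so your route buys a proof that does not lean on the cited literature.

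The one place that needs repair is $p=3$. Your congruence $H_n^{(3)}\equiv-\frac{1}{3}\bigl(B_{p-3}(n+1)-B_{p-3}\bigr)\pmod p$ rests on $i^{-3}\equiv i^{p-4}$ together with the Bernoulli power-sum formula for the exponent $p-4$, which requires $p\ge 5$; at $p=3$ the exponent is $-1$, and the formula as written would give $H_n^{(3)}\equiv 0\pmod 3$, which is false ($H_1^{(3)}=1$). The conclusion you then invoke, $W(a)\equiv\frac{a(a^2-1)}{3}B_0\pmod 3$, happens to be true --- and your subsequent algebra, including the use of $(a^2-1)^2\equiv 0\pmod 9$, does land correctly on $-c/a^3$ --- but it needs its own justification: either rerun your argument with $i^{-3}\equiv i\pmod 3$ and $B_2$ in place of $B_{p-3}$ (which recovers the paper's quoted value $\frac{a^2-1}{12a}$), or simply verify $W(a)=\lfloor a/3\rfloor-\lfloor 2a/3\rfloor\pmod 3$ directly for $a\equiv 1,2\pmod 3$. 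The paper sidesteps this by quoting Porubsk\'y's formula, which covers $p=3$.
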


\begin{proof}
	Notice that
	\begin{align*}
	h(k)=-cp\left\lfloor\frac{ak}{cp}\right\rfloor+ak,
	\end{align*}
	where $\lfloor x\rfloor$ denotes the largest integer not exceeding $x$. For convenience, we write
	\begin{align*}
	\ell(k):=\left\lfloor\frac{ak}{cp}\right\rfloor.
	\end{align*}
	Let $m=\phi(p^2)-1=p^2-p-1$. It follows from Euler's theorem that
	\begin{align*}
	\sum_{\substack{k=1\\\gcd(k,p)=1}}^{cp-1} \frac{1}{k\cdot h(k)} &\equiv \sum_{\substack{k=1\\\gcd(k,p)=1}}^{cp-1} \frac{(-cp\ell(k)+ak)^m}{k}\\
	&\equiv \sum_{\substack{k=1\\\gcd(k,p)=1}}^{cp-1} \frac{(ak)^m+c(ak)^{m-1}\ell(k)p}{k}\\
	&\equiv \frac{1}{a}\sum_{\substack{k=1\\\gcd(k,p)=1}}^{cp-1}\frac{1}{k^2}+\frac{cp}{a^2}\sum_{\substack{k=1\\\gcd(k,p)=1}}^{cp-1}\frac{1}{k^3}\left\lfloor\frac{ak}{cp}\right\rfloor \pmod{p^2}.
	\end{align*}
	
	\textit{Case 1: $p\ge 5$}. We know from \cite[Theorem 1.1]{Hon2000} that
	\begin{align*}
	\sum_{\substack{k=1\\\gcd(k,p)=1}}^{cp-1}\frac{1}{k^2} \equiv \frac{2c}{3}pB_{p-3} \pmod{p^2}.
	\end{align*}
	Also, \cite[Eq.~(1)]{Por1983} implies that
	\begin{align}\label{eq:floor}
	\sum_{\substack{k=1\\\gcd(k,p)=1}}^{cp-1}\frac{1}{k^3}\left\lfloor\frac{ak}{cp}\right\rfloor\equiv \sum_{\substack{k=1\\\gcd(k,p)=1}}^{cp-1}k^{p-4}\left\lfloor\frac{ak}{cp}\right\rfloor\equiv \frac{a^3-a}{3}B_{p-3} \pmod{p}.
	\end{align}
	Therefore, for $p\ge 5$,
	\begin{align*}
	\sum_{\substack{k=1\\\gcd(k,p)=1}}^{cp-1} \frac{1}{k\cdot h(k)} \equiv \frac{1}{a}\cdot\frac{2c}{3}pB_{p-3}+ \frac{cp}{a^2} \cdot \frac{a^3-a}{3}B_{p-3}= \frac{c(a^2+1)}{3a}pB_{p-3} \pmod{p^2}.
	\end{align*}
	
	\textit{Case 2: $p= 3$}. We first show that for any positive integer $s$,
	\begin{align}\label{eq:3c-1}
	\sum_{\substack{k=1\\\gcd(k,3)=1}}^{3s-1}\frac{1}{k^2}\equiv -s \pmod{3^2}.
	\end{align}
	To see this, we write $s=9t+s_0$ where $t\ge 0$ and $1\le s_0\le 9$. Then
	\begin{align*}
	\sum_{\substack{k=1\\\gcd(k,3)=1}}^{3s-1}\frac{1}{k^2}&=\sum_{\substack{k_0=1\\\gcd(k_0,3)=1}}^{3s_0-1}\frac{1}{(27t+k_0)^2}+\sum_{t_0=0}^{t-1}\sum_{\substack{k_0=1\\\gcd(k_0,3)=1}}^{26}\frac{1}{(27t_0+k_0)^2}\\
	&\equiv \sum_{\substack{k_0=1\\\gcd(k_0,3)=1}}^{3s_0-1}\frac{1}{k_0^2}+\sum_{t_0=0}^{t-1}\sum_{\substack{k_0=1\\\gcd(k_0,3)=1}}^{26}\frac{1}{k_0^2} \pmod{3^2}.
	\end{align*}
	Also, a direct verification shows that
	\begin{align*}
	\sum_{\substack{k_0=1\\\gcd(k_0,3)=1}}^{26}\frac{1}{k_0^2} \equiv 0 \pmod{3^2}
	\end{align*}
	and for $1\le s_0\le 9$,
	\begin{align*}
	\sum_{\substack{k_0=1\\\gcd(k_0,3)=1}}^{3s_0-1}\frac{1}{k_0^2}\equiv -s_0 \pmod{3^2}.
	\end{align*}
	Thus, \eqref{eq:3c-1} follows. On the other hand, we deduce from \cite[Eq.~(1)]{Por1983} that
	\begin{align}\label{eq:floor-2}
	\sum_{\substack{k=1\\\gcd(k,3)=1}}^{3c-1}\frac{1}{k^3}\left\lfloor\frac{ak}{3c}\right\rfloor\equiv \sum_{\substack{k=1\\\gcd(k,3)=1}}^{3c-1}k\left\lfloor\frac{ak}{3c}\right\rfloor\equiv \frac{a^2-1}{12a} \pmod{3}.
	\end{align}
	Recalling that $a$ is not divisible by $3$, we have $3a^2\equiv 3 \pmod{3^2}$. Thus,
	\begin{align*}
	\sum_{\substack{k=1\\\gcd(k,3)=1}}^{3c-1} \frac{1}{k\cdot h(k)} \equiv -\frac{c}{a}+\frac{3c}{a^2}\cdot \frac{a^2-1}{12a}= -\frac{c(3a^2+1)}{4a^3}\equiv -\frac{c}{a^3} \pmod{3^2}.
	\end{align*}
	This confirms our result for $p=3$.
\end{proof}

\begin{lemma}\label{le:mod-cp}
	Let $p$ be an odd prime and $r$ be a positive integer. Let $u$ and $v$ be positive integers. For any positive integers $a$, $b$ and $c$ such that they are not divisible by $p$, and both $a$ and $b$ are coprime to $c$,
	\begin{enumerate}[label={\textup{(\alph*).}},leftmargin=*,labelsep=0cm,align=left]
		\item for $p=3$,
		\begin{align}
		\sum_{\substack{1\le i\le u c\cdot 3^r\\1\le j\le v c\cdot 3^r\\ai\equiv bj \ (\operatorname{mod}\;3c)\\\gcd(ij,3)=1}}\frac{1}{ij} \equiv \begin{cases}
		-uva^3b^3c & \text{if $r=1$}\\[8pt]
		-3^{2r-2}uv\big(a^3b^3c+3abc\big)& \text{if $r\ge 2$}
		\end{cases} \pmod{3^{2r}};
		\end{align}
		
		\item for $p\ge 5$,
		\begin{align}
		\sum_{\substack{1\le i\le u cp^r\\1\le j\le v cp^r\\ai\equiv bj \ (\operatorname{mod}\;cp)\\\gcd(ij,p)=1}}\frac{1}{ij}\equiv p^{2r-1}B_{p-3}\cdot \frac{uvc(a^2+b^2)}{3ab}\pmod{p^{2r}}.
		\end{align}
	\end{enumerate}
\end{lemma}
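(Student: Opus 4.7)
The plan is to decompose every admissible $i$ into its residue modulo $cp$, so that the two-variable constraint $ai\equiv bj \pmod{cp}$ collapses to a bijection between residues, after which Lemma \ref{le:h(k)} can be invoked. Specifically, write $i=i_0+cp\cdot i_1$ with $i_0\in\{1,\ldots,cp\}$, $\gcd(i_0,p)=1$, and $i_1\in\{0,\ldots,up^{r-1}-1\}$, and similarly $j=j_0+cp\cdot j_1$. The congruence reduces to $ai_0\equiv bj_0 \pmod{cp}$, and since $a,b$ are coprime to $cp$, setting $a':=\overline{b}a \pmod{cp}$ (with $\overline{b}$ the inverse of $b$ modulo $cp$) identifies $j_0=h(i_0)$, where $h$ is the function of Lemma \ref{le:h(k)} with $a$ replaced by $a'$.

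Next, I expand each factor $1/(i_0+cp\cdot i_1)$ as a geometric series in $cp/i_0$, and likewise for $j$, to sufficient order. Summing over $(i_1,j_1)$ converts the inner double sum into a bilinear combination of the Faulhaber power sums $S_\ell(N):=\sum_{k=0}^{N-1}k^\ell$ evaluated at $N_u=up^{r-1}$ and $N_v=vp^{r-1}$. A $p$-adic valuation analysis then identifies which pairs of truncation indices $(j,k)$ contribute modulo $p^{2r}$: for $p\ge 5$, only $(0,0),(1,0),(0,1)$ contribute; for $p=3$ with $r\ge 2$, the pairs $(2,0)$ and $(0,2)$ must additionally be retained, since the factor $1/6$ inside $S_2$ carries a negative $3$-adic valuation.

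The $(0,0)$ contribution equals $uvp^{2r-2}\sum_{i_0}\frac{1}{i_0\,h(i_0)}$, which Lemma \ref{le:h(k)} evaluates directly. Translating the auxiliary parameter $a'$ back to $a,b$---via $(a')^2+1\equiv(a^2+b^2)/b^2 \pmod{p^2}$ and $3a'\equiv 3a/b \pmod{p^2}$ in the $p\ge 5$ case, and via the identity $x^{-3}\equiv x^3 \pmod 9$ (a consequence of $x^6\equiv 1 \pmod 9$ for $\gcd(x,3)=1$) in the $p=3$ case---recovers the leading term of the stated answer. The $(1,0)$ and $(0,1)$ contributions reduce to sums of the shape $\sum\frac{1}{i_0^2\,h(i_0)}$ and $\sum\frac{1}{i_0\,h(i_0)^2}$, both of which vanish modulo $p$ by the pairing $i_0\leftrightarrow cp-i_0$ (using $h(cp-i_0)=cp-h(i_0)$); hence they contribute $0$ modulo $p^{2r}$. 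For $p=3$ and $r\ge 2$, the extra $(2,0)$ and $(0,2)$ contributions are nonzero: modulo $3$ they simplify via $x^2\equiv 1 \pmod 3$ to sums that are essentially constant, and after invoking $2\equiv -1 \pmod 3$ they produce exactly the correction $-3^{2r-1}uv\,abc$ that, added to the leading $-3^{2r-2}uv\,a^3b^3c$, yields the stated $-3^{2r-2}uv(a^3b^3c+3abc)$.

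The main obstacle will be the $p=3$ bookkeeping: one has to track the $3$-adic valuations of the power-sum polynomials $S_\ell$ carefully, observe that the relevant index range widens only once $N_u$ and $N_v$ become divisible by $3$ (i.e., once $r\ge 2$), and handle all the modulo-$9$ simplifications correctly. This is exactly where the two cases of part (a) diverge, and where the exponent-$3$ identity $x^{-3}\equiv x^3 \pmod 9$ becomes essential.
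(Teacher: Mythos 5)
Your proposal is correct, and its skeleton coincides with the paper's: both split $i$ and $j$ by their residues modulo $cp$, use the invertibility of $b$ to collapse the constraint to $j_0=h(i_0)$, evaluate the leading term by Lemma \ref{le:h(k)} (translating $a'=a\overline{b}$ back to $a,b$ exactly as you describe, including $x^{-3}\equiv x^3\pmod 9$), kill the first-order corrections by the pairing $i_0\leftrightarrow cp-i_0$, and extract the extra $3abc$ term for $p=3$, $r\ge 2$ from the second-order corrections. Where you genuinely diverge is in how the inner double sum over the quotients $(i_1,j_1)$ is evaluated modulo $p^{2r}$: the paper packages this as $c^{-2}\,U(k\overline{c};u,p^r)\,U(h(k)\overline{c};v,p^r)$ and invokes Lemma \ref{le:U}(d), which is itself proved by an Euler's-theorem computation at $r=2$ followed by induction via parts (a)--(c); you instead expand $1/(i_0+cp\,i_1)$ as a $p$-adic geometric series and sum over $i_1$ via Faulhaber power sums $S_\ell$, with a valuation analysis selecting the surviving truncation indices. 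Your route buys two things: it dispenses with Lemma \ref{le:U} entirely, and it treats $r=1$ and $r\ge 2$ uniformly (the paper handles $r=1$ by a separate Euler's-theorem argument), with the case split in part (a) emerging naturally from $v_3(S_2(u3^{r-1}))=r-2$ only once $r\ge 2$. The price is the bookkeeping you flag: to be complete you must bound the valuations of \emph{all} pairs $(\ell,\ell')$, not just the small ones, which requires controlling the $p$-part of the denominators of the Faulhaber polynomials (the factor $1/(\ell+1)$ and the von Staudt--Clausen denominators of $B_j$); this works out because the loss grows only like $\log_p(\ell+1)+1$ while the gain from $(cp)^{\ell+\ell'}$ is linear, but it should be stated explicitly. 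With that caveat addressed, your argument is a sound and somewhat more self-contained alternative to the paper's.
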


\begin{proof}
	Since $b$ is coprime to $c$ and $p$, we let $\overline{b}$ be such that $1\le \overline{b}\le cp^2-1$ and $\overline{b}b\equiv 1\pmod{cp^2}$. For each positive integer $i$ with $\gcd(k,p)=1$, we denote by $h(i)$ the unique integer such that $1\le h(i)\le cp-1$ and $h(i)\equiv a\overline{b}i \pmod{cp}$. Notice that
	\begin{align*}
	h(i)=-cp\left\lfloor\frac{a\overline{b}i}{cp}\right\rfloor+a\overline{b}i.
	\end{align*}
	For convenience, we write
	\begin{align*}
	\ell(i):=\left\lfloor\frac{a\overline{b}i}{cp}\right\rfloor.
	\end{align*}
	Let $m=\phi(p^2)-1=p^2-p-1$. It follows from Euler's theorem that
	\begin{align*}
	\sum_{\substack{1\le i\le u cp\\1\le j\le v cp\\ai\equiv bj \ (\operatorname{mod}\;cp)\\\gcd(ij,p)=1}}\frac{1}{ij}&=\sum_{\substack{i=1\\\gcd(i,p)=1}}^{u cp-1}\sum_{L=0}^{v-1}\frac{1}{i(cpL+h(i))}\\
	&\equiv \sum_{\substack{i=1\\\gcd(i,p)=1}}^{u cp-1}\sum_{L=0}^{v-1}\frac{(cpL-cp\ell(i)+a\overline{b}i)^m}{i}\\
	&\equiv \sum_{\substack{i=1\\\gcd(i,p)=1}}^{u cp-1}\sum_{L=0}^{v-1}\frac{(a\overline{b}i)^m-c(a\overline{b}i)^{m-1}(L-\ell(i))p}{i}\\
	&\equiv \frac{v}{a\overline{b}}\sum_{\substack{i=1\\\gcd(i,p)=1}}^{u cp-1}\frac{1}{i^2}-\frac{v(v-1)cp}{2a^2\overline{b}^2}\sum_{\substack{i=1\\\gcd(i,p)=1}}^{u cp-1}\frac{1}{i^3}\\
	&\quad+\frac{vcp}{a^2\overline{b}^2}\sum_{\substack{i=1\\\gcd(i,p)=1}}^{u cp-1}\frac{1}{i^3}\left\lfloor\frac{a\overline{b}i}{cp}\right\rfloor \pmod{p^2}.
	\end{align*}
	We have two cases:
	
	\textit{Case 1: $p\ge 5$}. First, by \cite[Theorem 1.1]{Hon2000},
	\begin{align*}
	\sum_{\substack{i=1\\\gcd(i,p)=1}}^{u cp-1}\frac{1}{i^2} \equiv \frac{2uc}{3}pB_{p-3} \pmod{p^2}.
	\end{align*}
	Also,
	\begin{align*}
	\sum_{\substack{i=1\\\gcd(i,p)=1}}^{u cp-1}\frac{1}{i^3}=\frac{1}{2}\sum_{\substack{i=1\\\gcd(i,p)=1}}^{u cp-1}\left(\frac{1}{i^3}+\frac{1}{(ucp-i)^3}\right)\equiv 0 \pmod{p}.
	\end{align*}
	Finally,
	\begin{align*}
	\sum_{\substack{i=1\\\gcd(i,p)=1}}^{u cp-1}\frac{1}{i^3}\left\lfloor\frac{a\overline{b}i}{cp}\right\rfloor&=\sum_{w=0}^{u-1}\sum_{k=1}^{cp-1}\frac{1}{(wcp+k)^3}\left(\left\lfloor\frac{a\overline{b}k}{cp}\right\rfloor+wa\overline{b}\right)\\
	&\equiv u\sum_{k=1}^{cp-1} \frac{1}{k^3}\left\lfloor\frac{a\overline{b}k}{cp}\right\rfloor+a\overline{b}\sum_{w=0}^{u-1} w \sum_{k=1}^{cp-1}\frac{1}{k^3}\\
	&\equiv u\sum_{k=1}^{cp-1} \frac{1}{k^3}\left\lfloor\frac{a\overline{b}k}{cp}\right\rfloor\\
	&\equiv\frac{u(a^3\overline{b}^3-a\overline{b})}{3}B_{p-3} \pmod{p},
	\end{align*}
	where we make use of \eqref{eq:floor}. It follows that for $p\ge 5$,
	\begin{align*}
	\sum_{\substack{1\le i\le u cp\\1\le j\le v cp\\ai\equiv bj \ (\operatorname{mod}\;cp)\\\gcd(ij,p)=1}}\frac{1}{ij}&\equiv\frac{v}{a\overline{b}}\cdot \frac{2uc}{3}pB_{p-3} + \frac{vcp}{a^2\overline{b}^2}\cdot \frac{u(a^3\overline{b}^3-a\overline{b})}{3}B_{p-3}\\
	&\equiv pB_{p-3}\cdot \frac{uvc(a^2+b^2)}{3ab}\pmod{p^{2}}.
	\end{align*}
	
	\textit{Case 2: $p=3$}. We know from \eqref{eq:3c-1} that
	\begin{align*}
	\sum_{\substack{k=1\\\gcd(k,3)=1}}^{3uc-1}\frac{1}{k^2}\equiv -uc \pmod{3^2}.
	\end{align*}
	Also,
	\begin{align*}
	\sum_{\substack{k=1\\\gcd(k,3)=1}}^{3uc-1}\frac{1}{k^3}\equiv 0 \pmod{3}.
	\end{align*}
	Finally,
	\begin{align*}
	\sum_{\substack{i=1\\\gcd(i,p)=1}}^{3u c-1}\frac{1}{i^3}\left\lfloor\frac{a\overline{b}i}{cp}\right\rfloor\equiv u\sum_{k=1}^{3c-1} \frac{1}{k^3}\left\lfloor\frac{a\overline{b}k}{cp}\right\rfloor \equiv u\cdot \frac{a^2\overline{b}^2-1}{12a\overline{b}} \pmod{3},
	\end{align*}
	where \eqref{eq:floor-2} is applied. Therefore,
	\begin{align*}
	\sum_{\substack{1\le i\le 3u c\\1\le j\le 3v c\\ai\equiv bj \ (\operatorname{mod}\;3c)\\\gcd(ij,3)=1}}\frac{1}{ij}&\equiv \frac{v}{a\overline{b}}\cdot (-uc) + \frac{3vc}{a^2\overline{b}^2}\cdot u\cdot \frac{a^2\overline{b}^2-1}{12a\overline{b}}\\
	&= -\frac{uvc(3a^2\overline{b}^2+1)}{4a^3\overline{b}^3}\equiv -\frac{uvc}{a^3\overline{b}^3}\equiv -uva^3b^3c\pmod{3^2}.
	\end{align*}
	The above confirms our result for $r=1$.
	
	Now, we assume $r\ge 2$. Apart from the notation defined at the beginning of this proof, we also require the following. Since $c$ is coprime to $p$, we let $\overline{c}$ be such that $1\le \overline{c}\le p^{2r}-1$ and $\overline{c}c\equiv 1\pmod{p^{2r}}$. Then
	\begin{align*}
	\sum_{\substack{1\le i\le u cp^r\\1\le j\le v cp^r\\ai\equiv bj \ (\operatorname{mod}\;cp)\\\gcd(ij,p)=1}}\frac{1}{ij}&=\sum_{\substack{k=1\\\gcd(k,p)=1}}^{cp-1}\sum_{\substack{1\le i\le u cp^r\\i\equiv k \ (\operatorname{mod}\;cp)}}\frac{1}{i}\sum_{\substack{1\le j\le v cp^r\\j\equiv a\overline{b}k \ (\operatorname{mod}\;cp)}}\frac{1}{j}\\
	&=\sum_{\substack{k=1\\\gcd(k,p)=1}}^{cp-1}\sum_{I=0}^{up^{r-1}-1}\frac{1}{cpI+k}\sum_{J=0}^{vp^{r-1}-1}\frac{1}{cpJ+h(k)}\\
	&=\sum_{\substack{k=1\\\gcd(k,p)=1}}^{cp-1}\frac{1}{c^2}\sum_{I=0}^{up^{r-1}-1}\frac{1}{pI+\frac{k}{c}}\sum_{J=0}^{vp^{r-1}-1}\frac{1}{pJ+\frac{h(k)}{c}}\\
	&\equiv \sum_{\substack{k=1\\\gcd(k,p)=1}}^{cp-1}\frac{1}{c^2}\, U(k\overline{c};u,p^r)\,U(h(k)\overline{c};v,p^r) \pmod{p^{2r}}.
	\end{align*}
	We have two cases:
	
	\textit{Case 1: $p\ge 5$}. Applying Lemma \ref{le:U}(d) yields
	\begin{align*}
	\sum_{\substack{1\le i\le u cp^r\\1\le j\le v cp^r\\ai\equiv bj \ (\operatorname{mod}\;cp)\\\gcd(ij,p)=1}}\frac{1}{ij}&\equiv \sum_{\substack{k=1\\\gcd(k,p)=1}}^{cp-1}\frac{1}{c^2}\cdot\frac{uv}{k\cdot h(k)\cdot \overline{c}^2}\cdot p^{2r-2}\\
	&\quad+\frac{1}{2}\sum_{\substack{k=1\\\gcd(k,p)=1}}^{cp-1}\frac{1}{c^2}\cdot\left(\frac{uv}{k\cdot h^2(k)\cdot \overline{c}^3}+\frac{uv}{k^2\cdot h(k)\cdot \overline{c}^3}\right)\cdot p^{2r-1}\\
	&\equiv uv \sum_{\substack{k=1\\\gcd(k,p)=1}}^{cp-1} \frac{1}{k\cdot h(k)}\cdot p^{2r-2}\\
	&\quad+\frac{uvc}{2}\sum_{\substack{k=1\\\gcd(k,p)=1}}^{cp-1}\left(\frac{1}{k\cdot h^2(k)}+\frac{1}{k^2\cdot h(k)}\right)\cdot p^{2r-1} \pmod{p^{2r}}.
	\end{align*}
	By Lemma \ref{le:h(k)},
	\begin{align*}
	\sum_{\substack{k=1\\\gcd(k,p)=1}}^{cp-1} \frac{1}{k\cdot h(k)}\equiv \frac{c(a^2\overline{b}^2+1)}{3a\overline{b}}pB_{p-3}\equiv \frac{c(a^2+b^2)}{3ab}pB_{p-3}\pmod{p^{2r}}.
	\end{align*}
	Also, recalling that $h(k)\equiv a\overline{b}k \pmod{cp}$, we have
	\begin{align*}
	\sum_{\substack{k=1\\\gcd(k,p)=1}}^{cp-1}\left(\frac{1}{k\cdot h^2(k)}+\frac{1}{k^2\cdot h(k)}\right)&\equiv \sum_{\substack{k=1\\\gcd(k,p)=1}}^{cp-1}\left(\frac{1}{a^2\overline{b}^2k^3}+\frac{1}{a\overline{b}k^3}\right)\\
	&=\left(\frac{1}{a^2\overline{b}^2}+\frac{1}{a\overline{b}}\right)\sum_{\substack{k=1\\\gcd(k,p)=1}}^{cp-1}\frac{1}{k^3}\\
	&\equiv 0 \pmod{p}.
	\end{align*}
	Therefore, for $p\ge 5$,
	\begin{align*}
	\sum_{\substack{1\le i\le u cp^r\\1\le j\le v cp^r\\ai\equiv bj \ (\operatorname{mod}\;cp)\\\gcd(ij,p)=1}}\frac{1}{ij}\equiv p^{2r-1}B_{p-3}\cdot \frac{uvc(a^2+b^2)}{3ab}\pmod{p^{2r}}.
	\end{align*}
	
	\textit{Case 2: $p=3$}. Applying Lemma \ref{le:U}(d) yields
	\begin{align*}
	\sum_{\substack{1\le i\le u c\cdot 3^r\\1\le j\le v c\cdot 3^r\\ai\equiv bj \ (\operatorname{mod}\;3c)\\\gcd(ij,3)=1}}\frac{1}{ij}&\equiv uv \sum_{\substack{k=1\\\gcd(k,3)=1}}^{3c-1} \frac{1}{k\cdot h(k)}\cdot 3^{2r-2}\\
	&\quad+\frac{uvc}{2}\sum_{\substack{k=1\\\gcd(k,3)=1}}^{3c-1}\left(\frac{1}{k\cdot h^2(k)}+\frac{1}{k^2\cdot h(k)}\right)\cdot 3^{2r-1}\\
	&\quad+\frac{uvc^2}{2}\sum_{\substack{k=1\\\gcd(k,3)=1}}^{3c-1}\left(\frac{1}{k\cdot h^3(k)}+\frac{1}{k^3\cdot h(k)}\right)\cdot 3^{2r-1} \pmod{3^{2r}}.
	\end{align*}
	By Lemma \ref{le:h(k)} and the fact that $x^6\equiv 1 \pmod{3^2}$ for any $x$ not divisible by $3$,
	\begin{align*}
	\sum_{\substack{k=1\\\gcd(k,3)=1}}^{3c-1} \frac{1}{k\cdot h(k)}\equiv -\frac{c}{a^3\overline{b}^3}\equiv - \frac{cb^3}{a^3}\equiv -a^3b^3c \pmod{3^{2}}.
	\end{align*}
	Also,
	\begin{align*}
	\sum_{\substack{k=1\\\gcd(k,3)=1}}^{3c-1}\left(\frac{1}{k\cdot h^2(k)}+\frac{1}{k^2\cdot h(k)}\right)\equiv \left(\frac{1}{a^2\overline{b}^2}+\frac{1}{a\overline{b}}\right)\sum_{\substack{k=1\\\gcd(k,3)=1}}^{3c-1}\frac{1}{k^3}\equiv 0 \pmod{3}.
	\end{align*}
	Further,
	\begin{align*}
	\sum_{\substack{k=1\\\gcd(k,3)=1}}^{3c-1}\left(\frac{1}{k\cdot h^3(k)}+\frac{1}{k^3\cdot h(k)}\right)\equiv \left(\frac{1}{a^3\overline{b}^3}+\frac{1}{a\overline{b}}\right)\sum_{\substack{k=1\\\gcd(k,3)=1}}^{3c-1}\frac{1}{k^4} \pmod{3}.
	\end{align*}
	Similar to the proof of \eqref{eq:3c-1}, we have
	\begin{align*}
	\sum_{\substack{k=1\\\gcd(k,3)=1}}^{3c-1}\frac{1}{k^4}\equiv -c \pmod{3}.
	\end{align*}
	Recall also the fact that $x^2\equiv 1 \pmod{3}$ for any $x$ not divisible by $3$. Thus,
	\begin{align*}
	\sum_{\substack{k=1\\\gcd(k,3)=1}}^{3c-1}\left(\frac{1}{k\cdot h^3(k)}+\frac{1}{k^3\cdot h(k)}\right)\equiv \left(\frac{1}{a^3\overline{b}^3}+\frac{1}{a\overline{b}}\right)\cdot (-c)\equiv -2abc^{-1} \pmod{3}.
	\end{align*}
	We conclude that
	\begin{align*}
	\sum_{\substack{1\le i\le u c\cdot 3^r\\1\le j\le v c\cdot 3^r\\ai\equiv bj \ (\operatorname{mod}\;3c)\\\gcd(ij,3)=1}}\frac{1}{ij}&\equiv 3^{2r-2}uv\cdot (-a^3b^3c)+\frac{3^{2r-1}uvc^2}{2}\cdot (-2abc^{-1})\\
	&= -3^{2r-2}uv(a^3b^3c+3abc) \pmod{3^{2r}}.
	\end{align*}
	This completes the proof.
\end{proof}

\section{Theorem \ref{th:main} for $n=p^r$}

The object of this section is Theorem \ref{th:main} when $n$ is an odd prime power.

\begin{theorem}\label{th:ini}
	Let $a_1$, $a_2$ and $a_3$ be positive integers and let $A$ be a positive common multiple of $a_1$, $a_2$ and $a_3$. Let $g_1=\gcd(a_2,a_3)$, $g_2=\gcd(a_3,a_1)$, $g_3=\gcd(a_1,a_2)$ and $g=\gcd(a_1,a_2,a_3)$. Then for any odd prime $p$ not dividing $a_1$, $a_2$ and $a_3$, and any positive integer $r$,
	\begin{align}\label{eq:ini}
	\sum_{\substack{i,j,k\ge 1\\\gcd(ijk,p)=1\\a_1 i+a_2 j+a_3 k=Ap^r}}\frac{1}{ijk}\equiv -2p^{r-1}B_{p-3}\cdot \frac{Ag^3}{3}\left(\frac{1}{a_1^2 g_1^2}+\frac{1}{a_2^2 g_2^2}+\frac{1}{a_3^2 g_3^2}\right) \pmod{p^r}.
	\end{align}
\end{theorem}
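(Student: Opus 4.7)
The plan is to leverage the linear constraint $a_1 i + a_2 j + a_3 k = A p^r$ to rewrite the triple sum as a $\mathbb{Z}_p$-linear combination of three double sums. Multiplying the identity $\frac{a_1 i + a_2 j + a_3 k}{ijk} = \frac{a_1}{jk} + \frac{a_2}{ik} + \frac{a_3}{ij}$ by the common value of the numerator and summing over admissible triples yields
\begin{align*}
A p^r \sum_{\substack{i,j,k\ge 1\\\gcd(ijk,p)=1\\a_1 i+a_2 j+a_3 k=Ap^r}}\frac{1}{ijk} = a_1 T_1 + a_2 T_2 + a_3 T_3,
\end{align*}
where $T_\ell$ is obtained by eliminating the $\ell$-th variable from the denominator. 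For instance, $T_3$ sums $\frac{1}{ij}$ over pairs $(i, j)$ with $\gcd(ij, p) = 1$, $a_3 \mid a_1 i + a_2 j$, $p \nmid a_1 i + a_2 j$, and $a_1 i + a_2 j < A p^r$. Assuming $\gcd(A, p) = 1$ for concreteness (the case $p \mid A$ is handled by adjusting the $p$-adic precision), it then suffices to compute each $T_\ell$ modulo $p^{2r}$.

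I focus on $T_3$, the other cases being symmetric. First substitute $i = (g_1/g)\tilde i$ and $j = (g_2/g)\tilde j$: this is forced by the congruence $a_3 \mid a_1 i + a_2 j$ together with $\gcd(a_1, a_3) = g_2$ and $\gcd(a_2, a_3) = g_1$. The condition reduces to $\gamma \mid \alpha_1 \tilde i + \alpha_2 \tilde j$ with $\gamma = a_3 g/(g_1 g_2)$, $\alpha_1 = a_1/g_2$, $\alpha_2 = a_2/g_1$, where now $\gcd(\alpha_1, \gamma) = \gcd(\alpha_2, \gamma) = 1$, exactly the format of Lemmas~\ref{le:mod-c} and~\ref{le:mod-cp}. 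Next, inclusion-exclusion on the $p$-nondivisibility constraint gives $T_3 = T_3^{(\gamma)} - T_3^{(\gamma p)}$, where the superscript denotes the required divisor of $\alpha_1 \tilde i + \alpha_2 \tilde j$. With $u = A g_2/(a_1 a_3)$ and $v = A g_1/(a_2 a_3)$, both positive integers because $A$ is a common multiple of the $a_\ell$, the ambient box $[1, u\gamma p^r] \times [1, v\gamma p^r]$ contains the triangular region $\alpha_1 \tilde i + \alpha_2 \tilde j < A g p^r/(g_1 g_2)$, and a careful decomposition of the triangular sum into box-shaped components allows Lemma~\ref{le:mod-c} to force $T_3^{(\gamma)} \equiv 0 \pmod{p^{2r}}$ while Lemma~\ref{le:mod-cp} supplies the main term of $T_3^{(\gamma p)}$.

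Summing $a_1 T_1 + a_2 T_2 + a_3 T_3$ and dividing by $A p^r$ then gives the target. For $p \ge 5$, Lemma~\ref{le:mod-cp}(b) yields a contribution proportional to $p^{2r-1} B_{p-3} \cdot \frac{u v \gamma (\alpha_1^2 + \alpha_2^2)}{3 \alpha_1 \alpha_2}$ from $T_3^{(\gamma p)}$, which upon substituting the definitions of $u, v, \gamma, \alpha_1, \alpha_2$ and reintroducing the substitution factor $g^2/(g_1 g_2)$ collapses to the $\ell = 3$ term $\frac{A g^3}{3 a_3^2 g_3^2}$ of the target; the analogous reductions for $T_1$ and $T_2$ supply the $\ell = 1, 2$ pieces. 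The principal technical obstacle is the triangle-to-box reduction: the complementary region $\alpha_1 \tilde i + \alpha_2 \tilde j \ge A g p^r/(g_1 g_2)$ inside the ambient box does not admit a reflection that simultaneously preserves $p$-coprimality and the congruence residue, so the sum must be stratified by fixed values of $\alpha_1 \tilde i + \alpha_2 \tilde j$ with the inner single-variable sums treated via Lemma~\ref{le:arith-prog-p-r}. The $p = 3$ case requires parallel but independent verification using the explicit expression in Lemma~\ref{le:mod-cp}(a), tracking the extra lower-order terms against the corresponding $3$-adic expansion of the target.
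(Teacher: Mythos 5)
Your opening moves coincide with the paper's: multiply by $(a_1i+a_2j+a_3k)/(Ap^r)$ to trade the triple sum for double sums, strip out the gcd structure via $i=(g_1/g)\tilde{i}$, $j=(g_2/g)\tilde{j}$ so that the coefficients become pairwise coprime, and extract the main term from Lemma~\ref{le:mod-cp} after an inclusion--exclusion between the congruences modulo $\gamma$ and modulo $\gamma p$. The gap sits exactly at the step you yourself flag as the principal obstacle: your $T_3$ is a sum over the triangle $a_1i+a_2j<Ap^r$, while Lemmas~\ref{le:mod-c} and~\ref{le:mod-cp} only evaluate sums over full rectangular boxes $[1,u\gamma p^r]\times[1,v\gamma p^r]$. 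The repair you propose --- stratify by the value $S=\alpha_1\tilde{i}+\alpha_2\tilde{j}$ and treat the inner sums with Lemma~\ref{le:arith-prog-p-r} --- does not work. For fixed $S$ the inner sum is $\sum_{\tilde{i}}1/(\tilde{i}\,\tilde{j})$ with $\tilde{j}=(S-\alpha_1\tilde{i})/\alpha_2$ determined by $\tilde{i}$, i.e.\ a sum of reciprocals of a quadratic over a truncated range, which is not of the form covered by Lemma~\ref{le:arith-prog-p-r}; if you instead stratify by one of the variables, the triangular constraint truncates the inner range to a partial period, and the lemma again does not apply. Even where it would apply, it delivers only $p^{r}$ per inner sum, whereas the double sums must be controlled to precision $p^{2r}$ before dividing by $Ap^r$.

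The paper's escape is to apply the linear-form identity a \emph{second} time inside each $T_\ell$: after cancelling $i$ against $c_1 i$, reinsert $1/\ell$ with $c_1\ell=c_2j+c_3k$ and expand $(c_2j+c_3k)/(jk)$ once more. Summing the resulting six pieces over the three $T_\ell$ symmetrizes the one-sided constraint $c_1\ell>c_2j$ into the symmetric condition $c_1i\ne c_2j$ over the full box, as in \eqref{eq:trans-2}, and the exceptional diagonal $c_1i=c_2j$ is then absorbed by the subtracted modulo-$c_3p$ box sum in \eqref{eq:ij}. Without this second symmetrization, or an equivalent device, your decomposition cannot be completed at the required precision. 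The remaining bookkeeping in your outline --- the coprimality of $\alpha_1,\alpha_2,\gamma$, the choice of $u$ and $v$, and the collapse of $uv\gamma(\alpha_1^2+\alpha_2^2)/(3\alpha_1\alpha_2)$ to the term $Ag^3/(3a_3^2g_3^2)$ --- is consistent with the paper, and the caveat about $p\mid A$ is a genuine but minor point of $p$-adic precision that also deserves an explicit word.
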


Our starting point is the following specialization.

\begin{theorem}\label{th:ini-coprime}
	Let $c_1$, $c_2$ and $c_3$ be positive integers that are pairwise coprime and let $C$ be a positive common multiple of $c_1$, $c_2$ and $c_3$. Then for any odd prime $p$ not dividing $c_1$, $c_2$ and $c_3$, and any positive integer $r$,
	\begin{align}
	\sum_{\substack{i,j,k\ge 1\\\gcd(ijk,p)=1\\c_1 i+c_2 j+c_3 k=Cp^r}}\frac{1}{ijk}\equiv -2p^{r-1}B_{p-3}\cdot \frac{C}{3}\left(\frac{1}{c_1^2}+\frac{1}{c_2^2}+\frac{1}{c_3^2}\right) \pmod{p^r}.
	\end{align}
\end{theorem}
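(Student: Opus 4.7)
My approach is to combine the partial-fraction identity $\frac{Cp^r}{ijk}=\frac{c_1}{jk}+\frac{c_2}{ik}+\frac{c_3}{ij}$ (valid on the hyperplane $c_1 i+c_2 j+c_3 k=Cp^r$) with the rectangular-range congruences of Lemmas \ref{le:mod-c} and \ref{le:mod-cp}. Multiplying the summand of $S$ through by $Cp^r$ gives $Cp^r\cdot S=T_1+T_2+T_3$, and by the symmetric role of the three coordinates it suffices to analyse $T_3=c_3\sum 1/(ij)$, where $k$ has been eliminated via the linear constraint. The surviving conditions on $(i,j)$ are $i,j\ge 1$, $\gcd(ij,p)=1$, $c_3\mid c_1 i+c_2 j$, $p\nmid c_1 i+c_2 j$, and $c_1 i+c_2 j\le Cp^r-c_3$; since $\gcd(c_3,p)=1$, inclusion--exclusion on the $p$-divisibility condition yields $T_3=c_3(\Sigma^{\mathrm{tri}}_{c_3}-\Sigma^{\mathrm{tri}}_{c_3 p})$, where $\Sigma^{\mathrm{tri}}_M$ denotes the analogous sum over the triangle with the sole divisibility condition $M\mid c_1 i+c_2 j$.

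Next, to connect these triangular sums with the rectangular-range lemmas, I would use the pairwise coprimality $c_1c_2c_3\mid C$ to write $C=c_1c_2c_3D$ and compare $\Sigma^{\mathrm{tri}}_M$ with the rectangular sum $\Sigma^{\mathrm{rect}}_M$ over $[1,c_2c_3Dp^r]\times[1,c_1c_3Dp^r]=[1,Cp^r/c_1]\times[1,Cp^r/c_2]$ via the involution $(i,j)\mapsto(Cp^r/c_1-i,\,Cp^r/c_2-j)$. This involution is fixed-point-free on coprimality-restricted pairs (any fixed point would have $v_p(i)\ge r$), maps the triangle $c_1 i+c_2 j\le Cp^r-c_3$ onto its reflection $c_1 i+c_2 j\ge Cp^r+c_3$, and preserves the divisibility condition $M\mid c_1 i+c_2 j$ (as $c_1\cdot Cp^r/c_1+c_2\cdot Cp^r/c_2=2Cp^r$ is divisible by $c_3 p$). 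The residual strip collapses, under $c_3\mid c_1 i+c_2 j$, to the single line $c_1 i+c_2 j=Cp^r$, which automatically also satisfies $c_3 p\mid c_1 i+c_2 j$ and hence contributes identically to $\Sigma^{\mathrm{tri}}_{c_3}$ and $\Sigma^{\mathrm{tri}}_{c_3 p}$, cancelling in the difference. Expanding the reflected summand as a $p$-adic Taylor series,
\[
\frac{1}{(Cp^r/c_1-i)(Cp^r/c_2-j)}\equiv\frac{1}{ij}+\frac{Cp^r/c_1}{i^2j}+\frac{Cp^r/c_2}{ij^2}\pmod{p^{2r}}
\]
(the remaining term is of order $(Cp^r)^2$, hence $\equiv 0\pmod{p^{2r}}$), and summing over $(i,j)$ in the triangle, one rearranges to
\[
2\bigl(\Sigma^{\mathrm{tri}}_{c_3}-\Sigma^{\mathrm{tri}}_{c_3 p}\bigr)\equiv\bigl(\Sigma^{\mathrm{rect}}_{c_3}-\Sigma^{\mathrm{rect}}_{c_3 p}\bigr)-\tfrac{Cp^r}{c_1}\Delta_{2,1}-\tfrac{Cp^r}{c_2}\Delta_{1,2}\pmod{p^{2r}},
\]
where $\Delta_{a,b}$ is the analogous difference of $\sum 1/(i^a j^b)$ over the triangle.

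Then Lemma \ref{le:mod-c} gives $\Sigma^{\mathrm{rect}}_{c_3}\equiv 0\pmod{p^{2r}}$, Lemma \ref{le:mod-cp} supplies an explicit main term for $\Sigma^{\mathrm{rect}}_{c_3 p}$ proportional to $p^{2r-1}B_{p-3}$, and the correction prefactors $Cp^r/c_\ell$ having $p$-adic valuation $r$ mean that the $\Delta_{a,b}$ need to be understood only modulo $p^r$; the latter I would handle by a further application of Lemma \ref{le:arith-prog-p-r}, fixing $i$ and summing over $j$ along the corresponding arithmetic progression. Executing the symmetric analyses for $T_1$ and $T_2$, summing and dividing by $Cp^r$, the coefficients $c_\ell(c_m^2+c_n^2)/(c_m c_n)$ (with $\{\ell,m,n\}=\{1,2,3\}$) collapse into the symmetric expression $2 c_1 c_2 c_3\bigl(1/c_1^2+1/c_2^2+1/c_3^2\bigr)$, producing the claimed $-2p^{r-1}B_{p-3}\cdot(C/3)(1/c_1^2+1/c_2^2+1/c_3^2)$.

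\textbf{Main obstacle.} The hardest step is the triangle-to-rectangle passage. A naive replacement of $\Sigma^{\mathrm{tri}}_M$ by $\Sigma^{\mathrm{rect}}_M$ even gives the wrong sign: in the symmetric toy case $c_1=c_2=c_3=1$, $C=1$, $r=1$, $p=5$, one has $T_3=35/12\equiv 5\pmod{25}$ but $c_3(\Sigma^{\mathrm{rect}}_{c_3}-\Sigma^{\mathrm{rect}}_{c_3 p})\equiv -5\pmod{25}$. The involution above is intended to recover the correct value by recognising the triangle as essentially half of the rectangle along the diagonal, but the correction terms $(Cp^r/c_\ell)\Delta_{a,b}$ turn out to be genuinely of the same order as the main term (a calculation shows $\Delta_{a,b}$ is typically a $p$-adic unit), and extracting their explicit contributions---in particular verifying that they combine with the Lemma \ref{le:mod-cp} output into the final symmetric formula for arbitrary pairwise-coprime $c_1,c_2,c_3$---is where most of the technical work will lie.
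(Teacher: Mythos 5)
Your opening moves match the paper's: multiplying the summand by $Cp^r=c_1i+c_2j+c_3k$ and splitting into three double sums is exactly how the paper's proof of Theorem \ref{th:ini-coprime} begins. But from there you diverge, and the divergence leaves a genuine gap. After eliminating $k$ you are left with \emph{triangular} double sums, and your reflection identity $2\bigl(\Sigma^{\mathrm{tri}}_{c_3}-\Sigma^{\mathrm{tri}}_{c_3p}\bigr)\equiv\bigl(\Sigma^{\mathrm{rect}}_{c_3}-\Sigma^{\mathrm{rect}}_{c_3p}\bigr)-\tfrac{Cp^r}{c_1}\Delta_{2,1}-\tfrac{Cp^r}{c_2}\Delta_{1,2}$, while correct, requires the exact values of the truncated sums $\Delta_{a,b}$ modulo $p^r$. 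Nothing in the paper's toolkit computes these: Lemma \ref{le:arith-prog-p-r}, which you invoke, gives $\sum 1/j\equiv 0\pmod{p^r}$ only when $j$ runs over a \emph{complete} block of length a multiple of $mp^r$, whereas in $\Delta_{2,1}=\sum_i i^{-2}\sum_{j\le (Cp^r-c_3-c_1i)/c_2}j^{-1}$ the inner range is cut off at a point depending on $i$. Such incomplete harmonic sums are not zero mod $p$ (your own toy case has $\Delta_{2,1}\equiv 1\pmod 5$); evaluating them would need fresh Glaisher/Porubsk\'y-type input, i.e., a new lemma you have not supplied. Your numerics already show the corrections are not a perturbation: with $c_1=c_2=c_3=C=1$, $p=5$, $r=1$, the rectangular main terms contribute $10$ and the corrections contribute $20$ to $2Cp^rS\equiv 5\pmod{25}$, so the final ``collapse'' into $2c_1c_2c_3(c_1^{-2}+c_2^{-2}+c_3^{-2})$, which you derive from the rectangular terms alone, cannot be the whole computation. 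There is also an unresolved valuation issue for $r\ge 2$: if $\Delta_{a,b}$ is a unit, the correction $\tfrac{Cp^r}{c_\ell}\Delta_{a,b}$ has valuation $r$, strictly smaller than the $2r-1$ of the main term, which would force $S$ to be a unit mod $p^r$ and contradict the theorem; so either $\Delta_{a,b}$ degenerates for large $r$ or the six corrections cancel across $T_1,T_2,T_3$, and neither is established.

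The paper sidesteps all of this by applying the weighting trick a \emph{second} time, inside each double sum: writing $\frac{1}{jk}=\frac{1}{c_1i}\cdot\frac{c_2j+c_3k}{jk}$ splits $\sum\frac{c_1i}{ijk}$ into two double sums carrying the one-sided constraints $c_1i>c_2j$ and $c_1i>c_3k$, and when the analogous decompositions of $\sum\frac{c_2j}{ijk}$ and $\sum\frac{c_3k}{ijk}$ are added, each pair of complementary half-triangles reassembles into a full rectangle minus its diagonal (equation \eqref{eq:trans-2}). Everything then reduces directly to Lemmas \ref{le:mod-c} and \ref{le:mod-cp} with no boundary corrections. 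If you want to salvage your route, you must either prove an analogue of Lemma \ref{le:mod-cp} for the triangular sums $\sum i^{-2}j^{-1}$, or restructure the decomposition so that only complete rectangles appear --- which is precisely the paper's device.
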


\begin{proof}
	We have
	\begin{align}\label{eq:trans-1}
	\sum_{\substack{i,j,k\ge 1\\\gcd(ijk,p)=1\\c_1 i+c_2 j+c_3 k=Cp^r}}\frac{1}{ijk} = \frac{1}{Cp^r}\sum_{\substack{i,j,k\ge 1\\\gcd(ijk,p)=1\\c_1 i+c_2 j+c_3 k=Cp^r}}\frac{c_1 i+c_2 j+c_3 k}{ijk}.
	\end{align}
	Notice that
	\begin{align*}
	\sum_{\substack{i,j,k\ge 1\\\gcd(ijk,p)=1\\c_1 i+c_2 j+c_3 k=Cp^r}}\frac{c_1 i}{ijk}&=\sum_{\substack{1\le \ell\le Cp^r/c_1\\\gcd(\ell,p)=1}}\sum_{\substack{1\le j\le Cp^r/c_2\\1\le k\le Cp^r/c_3\\c_2j+c_3k=c_1\ell\\\gcd(jk,p)=1}}\frac{c_1}{jk}\\
	&=\sum_{\substack{1\le \ell\le Cp^r/c_1\\\gcd(\ell,p)=1}}\frac{1}{\ell}\sum_{\substack{1\le j\le Cp^r/c_2\\1\le k\le Cp^r/c_3\\c_2j+c_3k=c_1\ell\\\gcd(jk,p)=1}}\frac{c_2j+c_3k}{jk}\\
	&=c_3\sum_{\substack{1\le \ell\le Cp^r/c_1\\1\le j\le Cp^r/c_2\\\gcd(\ell j,p)=1\\c_1\ell>c_2j\\c_1\ell\equiv c_2 j\ (\operatorname{mod}\;c_3)\\p\,\nmid\, c_1\ell-c_2 j}}\frac{1}{\ell j}+c_2\sum_{\substack{1\le \ell\le Cp^r/c_1\\1\le k\le Cp^r/c_3\\\gcd(\ell k,p)=1\\c_1\ell>c_3k\\c_1\ell\equiv c_3 k\ (\operatorname{mod}\;c_2)\\p\,\nmid\, c_1\ell-c_3 k}}\frac{1}{\ell k}.
	\end{align*}
	Applying the same argument to the remaining two terms on the right hand side of \eqref{eq:trans-1} yields
	\begin{align}\label{eq:trans-2}
	\sum_{\substack{i,j,k\ge 1\\\gcd(ijk,p)=1\\c_1 i+c_2 j+c_3 k=Cp^r}}\frac{1}{ijk} &= \frac{c_3}{Cp^r}\sum_{\substack{1\le i\le Cp^r/c_1\\1\le j\le Cp^r/c_2\\\gcd(i j,p)=1\\c_1 i \ne c_2j\\c_1 i\equiv c_2 j\ (\operatorname{mod}\;c_3)\\p\,\nmid\, c_1 i-c_2 j}}\frac{1}{i j} +\frac{c_2}{Cp^r}\sum_{\substack{1\le k\le Cp^r/c_3\\1\le i\le Cp^r/c_1\\\gcd(k i,p)=1\\c_3 k\ne c_1 i\\c_3 k\equiv c_1 i\ (\operatorname{mod}\;c_2)\\p\,\nmid\, c_3 k-c_1 i}}\frac{1}{ki}\notag\\
	&\quad+\frac{c_1}{Cp^r}\sum_{\substack{1\le j\le Cp^r/c_2\\1\le k\le Cp^r/c_3\\\gcd(j k,p)=1\\c_2j\ne c_3k\\c_2j\equiv c_3k\ (\operatorname{mod}\;c_1)\\p\,\nmid\, c_2j-c_3k}}\frac{1}{j k}.
	\end{align}
	
	Next, we notice that
	\begin{align}\label{eq:ij}
	\sum_{\substack{1\le i\le Cp^r/c_1\\1\le j\le Cp^r/c_2\\\gcd(i j,p)=1\\c_1 i \ne c_2j\\c_1 i\equiv c_2 j\ (\operatorname{mod}\;c_3)\\p\,\nmid\, c_1 i-c_2 j}}\frac{1}{i j} = \sum_{\substack{1\le i\le Cp^r/c_1\\1\le j\le Cp^r/c_2\\c_1 i\equiv c_2 j\ (\operatorname{mod}\;c_3)\\\gcd(i j,p)=1}}\frac{1}{i j}-\sum_{\substack{1\le i\le Cp^r/c_1\\1\le j\le Cp^r/c_2\\c_1 i\equiv c_2 j\ (\operatorname{mod}\;c_3p)\\\gcd(i j,p)=1}}\frac{1}{i j}.
	\end{align}
	Applying Lemmas \ref{le:mod-c} and \ref{le:mod-cp} with $a=c_1$, $b=c_2$, $c=c_3$, $u=C/(c_1c_3)$ and $v=C/(c_2c_3)$, we find that if $p\ge 5$,
	\begin{align*}
	\operatorname{LHS}\eqref{eq:ij}\equiv -p^{2r-1}B_{p-3}\cdot \frac{C^2}{3c_3}\left(\frac{1}{c_1^2}+\frac{1}{c_2^2}\right)\pmod{p^{2r}}
	\end{align*}
	and if $p=3$,
	\begin{align*}
	\operatorname{LHS}\eqref{eq:ij}\equiv 3^{2r-2}\cdot \frac{C^2}{c_3}\big(c_1^2c_2^2+3\delta(r)\big) \pmod{3^{2r}}
	\end{align*}
	where $\delta(r)=0$ if $r=1$ and $1$ if $r\ge 2$.
	
	Applying the same argument to the remaining two terms on the right hand side of \eqref{eq:trans-2} immediately gives \eqref{eq:ini} for $p\ge 5$.
	
	For $p=3$, however, we deduce
	\begin{align*}
	\sum_{\substack{i,j,k\ge 1\\\gcd(ijk,3)=1\\c_1 i+c_2 j+c_3 k=C\cdot 3^r}}\frac{1}{ijk}\equiv 3^{r-2}\cdot C (c_1^2c_2^2+c_2^2c_3^2+c_3^2c_1^2) \pmod{3^r}.
	\end{align*}
	Recalling that $c_1$, $c_2$ and $c_3$ are not divisible by $3$, we have
	\begin{align*}
	c_1^2c_2^2+c_2^2c_3^2+c_3^2c_1^2 \equiv 0 \pmod{3}
	\end{align*}
	and
	\begin{align*}
	-\frac{2}{c_1^2c_2^2c_3^2}\equiv 1 \pmod{3}.
	\end{align*}
	Therefore,
	\begin{align*}
	\sum_{\substack{i,j,k\ge 1\\\gcd(ijk,3)=1\\c_1 i+c_2 j+c_3 k=C\cdot 3^r}}\frac{1}{ijk}\equiv -\frac{2}{c_1^2c_2^2c_3^2}\cdot 3^{r-2}\cdot C (c_1^2c_2^2+c_2^2c_3^2+c_3^2c_1^2) \pmod{3^r}.
	\end{align*}
	This establishes \eqref{eq:ini} for $p=3$.
\end{proof}


\begin{proof}[Proof of Theorem \ref{th:ini}]
	For convenience, we define
	$$b_1=\frac{a_1g}{g_2g_3},\quad b_2=\frac{a_2g}{g_3g_1},\quad b_3=\frac{a_3g}{g_1g_2}.$$
	It is straightforward to verify that $b_1$, $b_2$ and $b_3$ are positive integers that are pairwise coprime. Also, $Ag^2/(g_1g_2g_3)$ is a common multiple of $b_1$, $b_2$ and $b_3$.
	
	Now, notice that there is a bijection between
	\begin{align*}
	\left\{(i,j,k)\in\mathbb{Z}_{>0}:a_1i+a_2j+a_3k=Ap^r\ \text{and}\ \gcd(ijk,p)=1\right\}
	\end{align*}
	and
	\begin{align*}
	\left\{(I,J,K)\in\mathbb{Z}_{>0}:b_1I+b_2J+b_3K=\frac{Ag^2}{g_1g_2g_3}p^r\ \text{and}\ \gcd(IJK,p)=1\right\}
	\end{align*}
	given by
	\begin{align*}
	(I,J,K)=\left(\frac{gi}{g_1},\frac{gj}{g_2},\frac{gk}{g_3}\right).
	\end{align*}
	Thus,
	\begin{align*}
	\sum_{\substack{i,j,k\ge 1\\\gcd(ijk,p)=1\\a_1 i+a_2 j+a_3 k=Ap^r}}\frac{1}{ijk} = \frac{g^3}{g_1g_2g_3}\sum_{\substack{I,J,K\ge 1\\\gcd(IJK,p)=1\\b_1I+b_2J+b_3K=\frac{Ag^2}{g_1g_2g_3}p^r}}\frac{1}{IJK}.
	\end{align*}
	By Theorem \ref{th:ini-coprime},
	\begin{align*}
	\sum_{\substack{I,J,K\ge 1\\\gcd(IJK,p)=1\\b_1I+b_2J+b_3K=\frac{Ag^2}{g_1g_2g_3}p^r}}\frac{1}{IJK} \equiv -2p^{r-1}B_{p-3}\cdot \frac{Ag^2}{3g_1g_2g_3}\left(\frac{1}{b_1^2}+\frac{1}{b_2^2}+\frac{1}{b_3^2}\right) \pmod{p^r}.
	\end{align*}
	Combining the above two relations and inserting the expressions of $b_1$, $b_2$ and $b_3$, the desired result follows.
\end{proof}

\section{An auxiliary polynomial}

Throughout, for any polynomial or formal power series $f(x)$, we use the conventional notation $[x^m]f(x)$ to denote the coefficient of $x^m$ in $f(x)$ for any nonnegative integer $m$.

We introduce a polynomial for positive integers $M$ and $N$:
\begin{align}\label{eq:F-def}
f(x;M,N):=\sum_{\substack{k=1\\\gcd(k,N)=1}}^{MN}\frac{x^k}{k}.
\end{align}
Notice that
\begin{align}\label{eq:key}
\sum_{\substack{i,j,k\ge 1\\\gcd(ijk,n)=1\\a_1 i+a_2 j+a_3 k=An}}\frac{1}{ijk} = [x^{An}]\prod_{i=1}^3 f(x^{a_i};A/a_i,n).
\end{align}
This relation will be repeatedly used in the sequel.

\begin{lemma}\label{le:2AN}
	Let $a_1$, $a_2$ and $a_3$ be positive integers and let $A$ be a positive common multiple of $a_1$, $a_2$ and $a_3$. For any positive integer $N$, the polynomial
	$$\prod_{i=1}^3 f(x^{a_i};A/a_i,N)$$
	has no constant term and its degree is smaller than $3AN$. Further,
	\begin{align}\label{eq:2AN}
	[x^{2AN}]\prod_{i=1}^3 f(x^{a_i};A/a_i,N)\equiv -[x^{AN}]\prod_{i=1}^3 f(x^{a_i};A/a_i,N) \pmod{N}.
	\end{align}
\end{lemma}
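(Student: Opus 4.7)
The plan is to handle the two assertions separately. The no-constant-term and degree bounds are direct bookkeeping from the definition of $f(\cdot;M,N)$: each factor $f(x^{a_i};A/a_i,N)$ has smallest exponent $a_i\ge 1$, so the product vanishes at $x=0$, and its largest surviving exponent is $a_i$ times the largest $k\le (A/a_i)N$ with $\gcd(k,N)=1$. When $N>1$, the value $k=(A/a_i)N$ is excluded (its gcd with $N$ equals $N$), so the maximum exponent in the $i$-th factor is at most $AN-a_i$, giving the product a degree at most $3AN-a_1-a_2-a_3<3AN$. When $N=1$ the second assertion is trivial since the modulus is $1$.

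For the congruence the approach is to construct an explicit sign-flipping involution. From \eqref{eq:key} (read formally in either direction), the coefficient $[x^{mAN}]\prod_{i=1}^3 f(x^{a_i};A/a_i,N)$ equals
\[
\sum_{\substack{1\le k_i\le (A/a_i)N\\ \gcd(k_i,N)=1\\ a_1k_1+a_2k_2+a_3k_3=mAN}}\frac{1}{k_1k_2k_3}
\]
for $m=1$ and for $m=2$. I would then introduce the reflection
\[
(k_1,k_2,k_3)\longmapsto (k_1',k_2',k_3'):=\bigl((A/a_1)N-k_1,\,(A/a_2)N-k_2,\,(A/a_3)N-k_3\bigr)
\]
and verify three points: (i) the ranges $1\le k_i'\le (A/a_i)N$ are preserved (using $N>1$ together with $\gcd(k_i,N)=1$, which forces $k_i<(A/a_i)N$ and hence $k_i'\ge 1$); (ii) $\gcd(k_i',N)=\gcd(-k_i,N)=1$; (iii) the linear form transforms as $a_1k_1'+a_2k_2'+a_3k_3'=3AN-(a_1k_1+a_2k_2+a_3k_3)$. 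Together these give a bijection between the triples summing to $AN$ and those summing to $2AN$.

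The congruence then reduces to a mod-$N$ computation of the weights. Since $(A/a_i)N\equiv 0\pmod{N}$ and $k_i$ is invertible modulo $N$, one has $1/k_i'\equiv -1/k_i\pmod{N}$, and multiplying across gives $1/(k_1'k_2'k_3')\equiv -1/(k_1k_2k_3)\pmod{N}$. Summing over the bijection produces \eqref{eq:2AN}. The only bookkeeping to watch is verification (i) above, together with tracking the cubic factor of $-1$ that ultimately supplies the minus sign in the statement; otherwise the argument is a clean reflection/symmetry argument with no substantial obstacle.
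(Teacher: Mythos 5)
Your proof is correct and takes essentially the same route as the paper: the paper likewise establishes \eqref{eq:2AN} via the reflection $(i,j,k)\mapsto(a_1'N-i,\,a_2'N-j,\,a_3'N-k)$ with $a_i'=A/a_i$, using $a_i'N-k\equiv -k\pmod{N}$ to extract the factor $(-1)^3$. If anything, you are more careful than the paper about the $N=1$ edge case and the exclusion of $k=(A/a_i)N$ in the degree bound.
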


\begin{proof}
	For convenience, we write $a_i'=A/a_i$ for $i=1,2,3$. The first part comes from the fact that for each $i$, $f(x^{a_i};a_i',N)$ is a polynomial of degree smaller than $AN$ whose constant term vanishes. For the second part, we have, modulo $N$,
	\begin{align*}
	[x^{2AN}]\prod_{i=1}^3 f(x^{a_i};a_i',N) &= \sum_{\substack{1\le i\le a_1'N\\1\le j\le a_2'N\\1\le k\le a_3'N\\\gcd(ijk,N)=1\\a_1 i+a_2 j+a_3 k=2AN}}\frac{1}{ijk}\\
	&=\sum_{\substack{i,j,k\ge 1\\\gcd(ijk,N)=1\\a_1 i+a_2 j+a_3 k=AN}}\frac{1}{(a_1'N-i)(a_2'N-j)(a_3'N-k)}\\
	&\equiv -\sum_{\substack{i,j,k\ge 1\\\gcd(ijk,N)=1\\a_1 i+a_2 j+a_3 k=AN}}\frac{1}{ijk},
	\end{align*}
	which gives \eqref{eq:2AN} by recalling \eqref{eq:key}.
\end{proof}

Also, we require the following property of the polynomial $f$.

\begin{lemma}\label{le:mod-key}
	Let $L$, $M$ and $N$ be positive integers. Then
	\begin{align}
	f(x;LM,N)\equiv \frac{1-x^{LMN}}{1-x^{MN}}f(x;M,N) \pmod{N}.
	\end{align}
\end{lemma}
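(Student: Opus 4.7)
The plan is to expand both sides as explicit sums in $x$ and show that the coefficient of each monomial agrees modulo $N$ after absorbing suitable invertible denominators.

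First, I would rewrite the rational factor as a finite geometric series:
$$\frac{1-x^{LMN}}{1-x^{MN}}=\sum_{\ell=0}^{L-1}x^{\ell MN},$$
so that the right-hand side becomes
$$\frac{1-x^{LMN}}{1-x^{MN}}f(x;M,N)=\sum_{\ell=0}^{L-1}\sum_{\substack{k_0=1\\\gcd(k_0,N)=1}}^{MN}\frac{x^{\ell MN+k_0}}{k_0}.$$
On the left-hand side, I would partition the summation index $k$ in $f(x;LM,N)$ by Euclidean division with respect to $MN$: every $k$ with $1\le k\le LMN$ can be uniquely written as $k=\ell MN+k_0$ with $0\le \ell\le L-1$ and $1\le k_0\le MN$, and the crucial observation is that $\gcd(k,N)=1$ if and only if $\gcd(k_0,N)=1$ because $\ell MN$ is a multiple of $N$. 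This gives
$$f(x;LM,N)=\sum_{\ell=0}^{L-1}\sum_{\substack{k_0=1\\\gcd(k_0,N)=1}}^{MN}\frac{x^{\ell MN+k_0}}{\ell MN+k_0}.$$

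Subtracting term-by-term, the difference of the two sides is
$$\sum_{\ell=0}^{L-1}\sum_{\substack{k_0=1\\\gcd(k_0,N)=1}}^{MN}x^{\ell MN+k_0}\left(\frac{1}{\ell MN+k_0}-\frac{1}{k_0}\right)=-\sum_{\ell=0}^{L-1}\sum_{\substack{k_0=1\\\gcd(k_0,N)=1}}^{MN}\frac{\ell MN\cdot x^{\ell MN+k_0}}{k_0(\ell MN+k_0)}.$$
Each summand carries an explicit factor of $N$ in the numerator, and the denominator $k_0(\ell MN+k_0)$ is coprime to $N$ because $\gcd(k_0,N)=1$ forces $\gcd(\ell MN+k_0,N)=1$; hence each term vanishes modulo $N$, and the lemma follows.

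The only thing that needs a little care is the interpretation of congruences of polynomials with rational coefficients modulo $N$, but the coprimality condition built into the definition of $f$ (and preserved by the shift $k_0\mapsto \ell MN+k_0$) makes every denominator that appears a unit modulo $N$, so no genuine obstacle arises; the whole argument is a routine index partition plus one common-denominator manipulation.
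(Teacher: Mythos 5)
Your proposal is correct and follows essentially the same route as the paper: split the index as $k=\ell MN+k_0$, expand the geometric factor, and use that $\tfrac{1}{\ell MN+k_0}\equiv\tfrac{1}{k_0}\pmod N$ since the denominators are units modulo $N$. The only difference is that you spell out the common-denominator step that the paper leaves implicit, which is fine.
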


\begin{proof}
	We have, modulo $N$,
	\begin{align*}
	f(x;LM,N)&=\sum_{\substack{k=1\\\gcd(k,N)=1}}^{LMN}\frac{x^k}{k}=\sum_{\ell=0}^{L-1}\sum_{\substack{k=1\\\gcd(k,N)=1}}^{MN}\frac{x^{\ell MN+k}}{\ell MN+k}\\
	&\equiv \sum_{\ell=0}^{L-1} x^{\ell MN} \sum_{\substack{k=1\\\gcd(k,N)=1}}^{MN}\frac{x^{k}}{k}=\frac{1-x^{LMN}}{1-x^{MN}}f(x;M,N).
	\end{align*}
	We therefore arrive at the desired result.
\end{proof}

\section{A proof by induction}

Now, we are ready to prove Theorem \ref{th:main} by induction on the number of prime factors of $n$.

Let $p$ be an odd prime not dividing $a_1$, $a_2$ and $a_3$. We assume that
\begin{align}\label{eq:ind-base}
[x^{An}]\prod_{i=1}^3 f(x^{a_i};A/a_i,n)&=\sum_{\substack{i,j,k\ge 1\\\gcd(ijk,n)=1\\a_1 i+a_2 j+a_3 k=An}}\frac{1}{ijk}\notag\\
&\equiv -2B_{p-3}\cdot \frac{n}{p}\cdot \frac{Ag^3}{3}\left(\frac{1}{a_1^2 g_1^2}+\frac{1}{a_2^2 g_2^2}+\frac{1}{a_3^2 g_3^2}\right)\notag\\
&\quad\times \prod_{\substack{\text{prime $q\mid n$}\\q\ne p}}\left(1-\frac{2}{q}\right)\left(1-\frac{1}{q^3}\right) \pmod{p^r}
\end{align}
holds true for some $n$ with $p^r\parallel n$. We have established the case where $n=p^r$ in Theorem \ref{th:ini}.

Our object is to show that for any prime $q$ not dividing $n$, and any positive integer $s$,
\begin{align}\label{eq:induction}
\sum_{\substack{i,j,k\ge 1\\\gcd(ijk,q^sn)=1\\a_1 i+a_2 j+a_3 k=Aq^sn}}\frac{1}{ijk}\equiv q^s\left(1-\frac{2}{q}\right)\left(1-\frac{1}{q^3}\right)\sum_{\substack{i,j,k\ge 1\\\gcd(ijk,n)=1\\a_1 i+a_2 j+a_3 k=An}}\frac{1}{ijk} \pmod{p^r}.
\end{align}
Then Theorem \ref{th:main} follows by induction.

\begin{proof}[Proof of \eqref{eq:induction}]
	Let $a_i'=A/a_i$ for $i=1,2,3$. By \eqref{eq:key},
	\begin{align*}
	\sum_{\substack{i,j,k\ge 1\\\gcd(ijk,q^sn)=1\\a_1 i+a_2 j+a_3 k=Aq^sn}}\frac{1}{ijk} = [x^{Aq^sn}]\prod_{i=1}^3 f(x^{a_i};a_i',q^sn).
	\end{align*}
	We then reformulate each $f(x^{a_i};a_i',q^sn)$ as follows. First, by \eqref{eq:F-def},
	\begin{align*}
	f(x^{a_i};a_i',q^sn) &= \sum_{\substack{k=1\\\gcd(k,q^sn)=1}}^{a_i'q^sn}\frac{x^{a_ik}}{k}\\
	&=\sum_{\substack{k=1\\\gcd(k,n)=1}}^{a_i'q^sn}\frac{x^{a_ik}}{k} - \sum_{\substack{k=1\\\gcd(k,n)=1}}^{a_i'q^{s-1}n}\frac{x^{a_iqk}}{qk}\\
	&=f(x^{a_i};a_i'q^s,n)-\frac{1}{q}f(x^{a_iq};a_i'q^{s-1},n).
	\end{align*} 
	Recalling that $p^r\mid n$, we apply Lemma \ref{le:mod-key} to $f(x^{a_i};a_i'q^s,n)$ with $(x,L,M,N)\mapsto (x^{a_i},q^{s-1},a_i'q,n)$ and to $f(x^{a_iq};a_i'q^{s-1},n)$ with $(x,L,M,N)\mapsto (x^{a_iq},q^{s-1},a_i',n)$, respectively. Thus,
	\begin{align}
	f(x^{a_i};a_i',q^sn) \equiv \frac{1-x^{Aq^sn}}{1-x^{Aqn}}\left(f(x^{a_i};a_i'q,n)-\frac{1}{q}f(x^{a_iq};a_i',n)\right) \pmod{p^r}.
	\end{align}
	It follows that
	\begin{align}
	&\sum_{\substack{i,j,k\ge 1\\\gcd(ijk,q^sn)=1\\a_1 i+a_2 j+a_3 k=Aq^sn}}\frac{1}{ijk}\notag\\
	&\qquad \equiv [x^{Aq^sn}]\left(\frac{1-x^{Aq^sn}}{1-x^{Aqn}}\right)^3\prod_{i=1}^3 \left(f(x^{a_i};a_i'q,n)-\frac{1}{q}f(x^{a_iq};a_i',n)\right)\pmod{p^r}.
	\end{align}
	
	Since we want to calculate the coefficient of $x^{Aq^s n}$ in the above while the cube on the right hand side is a polynomial of $x^{Aqn}$, we may expand the product on the right hand side into eight parts and apply Lemma \ref{le:2AN}. Thus,
	\begin{align*}
	\sum_{\substack{i,j,k\ge 1\\\gcd(ijk,q^sn)=1\\a_1 i+a_2 j+a_3 k=Aq^sn}}\frac{1}{ijk}&\equiv [x^{Aqn(q^{s-1}-1)}]\frac{1}{(1-x^{Aqn})^3}\cdot [x^{Aqn}]\sum_{j=1}^8 S_j(x)\\
	&\quad+[x^{Aqn(q^{s-1}-2)}]\frac{1}{(1-x^{Aqn})^3}\cdot [x^{2Aqn}]\sum_{j=1}^8 S_j(x)\\ &\equiv\left([x^{Aqn(q^{s-1}-1)}]\frac{1}{(1-x^{Aqn})^3}-[x^{Aqn(q^{s-1}-2)}]\frac{1}{(1-x^{Aqn})^3}\right)\\
	&\quad\times [x^{Aqn}]\sum_{j=1}^8 S_j(x) \pmod{p^r},
	\end{align*}
	where
	\begin{align*}
	S_1(x)&:=f(x^{a_1};a_1'q,n)f(x^{a_2};a_2'q,n)f(x^{a_3};a_3'q,n),\\
	S_2(x)&:=-\frac{1}{q}f(x^{a_1q};a_1',n)f(x^{a_2};a_2'q,n)f(x^{a_3};a_3'q,n),\\
	S_3(x)&:=-\frac{1}{q}f(x^{a_1};a_1'q,n)f(x^{a_2q};a_2',n)f(x^{a_3};a_3'q,n),\\
	S_4(x)&:=-\frac{1}{q}f(x^{a_1};a_1'q,n)f(x^{a_2};a_2'q,n)f(x^{a_3q};a_3',n),\\
	S_5(x)&:=\frac{1}{q^2}f(x^{a_1q};a_1',n)f(x^{a_2q};a_2',n)f(x^{a_3};a_3'q,n),\\
	S_6(x)&:=\frac{1}{q^2}f(x^{a_1q};a_1',n)f(x^{a_2};a_2'q,n)f(x^{a_3q};a_3',n),\\
	S_7(x)&:=\frac{1}{q^2}f(x^{a_1};a_1'q,n)f(x^{a_2q};a_2',n)f(x^{a_3q};a_3',n),\\
	S_8(x)&:=-\frac{1}{q^3}f(x^{a_1q};a_1',n)f(x^{a_2q};a_2',n)f(x^{a_3q};a_3',n).
	\end{align*}
	Notice also that for any nonnegative integer $m$,
	\begin{align*}
	[z^m]\frac{1}{(1-z)^3}&=[z^m](1+z+z^2+z^3+\cdots)^3\notag\\
	&=\operatorname{card}\{(u,v,w)\in\mathbb{Z}_{\ge 0}^3:u+v+w=m\}\\
	&=\binom{m+2}{2}.
	\end{align*}
	Thus,
	\begin{align*}
	\sum_{\substack{i,j,k\ge 1\\\gcd(ijk,q^sn)=1\\a_1 i+a_2 j+a_3 k=Aq^sn}}\frac{1}{ijk}\equiv q^{s-1}\cdot [x^{Aqn}]\sum_{j=1}^8 S_j(x) \pmod{p^r}.
	\end{align*}
	
	If we write
	\begin{align*}
	\Xi:=-2B_{p-3}\cdot \frac{n}{p}\cdot\prod_{\substack{\text{prime $q_0\mid n$}\\q\ne p}}\left(1-\frac{2}{q_0}\right)\left(1-\frac{1}{q_0^3}\right),
	\end{align*}
	then applying \eqref{eq:ind-base} to each $S_j(x)$ gives
	\begin{align*}
	[x^{Aqn}]S_1(x)&\equiv\Xi\cdot\frac{Aqg^3}{3}\left(\frac{1}{a_1^2 g_1^2}+\frac{1}{a_2^2 g_2^2}+\frac{1}{a_3^2 g_3^2}\right),\\
	[x^{Aqn}]S_2(x)&\equiv-\Xi\cdot\frac{1}{q}\cdot\frac{Aqg^3}{3}\left(\frac{1}{a_1^2q^2 g_1^2}+\frac{1}{a_2^2 g_2^2}+\frac{1}{a_3^2 g_3^2}\right),\\
	[x^{Aqn}]S_3(x)&\equiv-\Xi\cdot\frac{1}{q}\cdot\frac{Aqg^3}{3}\left(\frac{1}{a_1^2 g_1^2}+\frac{1}{a_2^2q^2 g_2^2}+\frac{1}{a_3^2 g_3^2}\right),\\
	[x^{Aqn}]S_4(x)&\equiv-\Xi\cdot\frac{1}{q}\cdot\frac{Aqg^3}{3}\left(\frac{1}{a_1^2 g_1^2}+\frac{1}{a_2^2 g_2^2}+\frac{1}{a_3^2q^2 g_3^2}\right),\\
	[x^{Aqn}]S_5(x)&\equiv\Xi\cdot\frac{1}{q^2}\cdot\frac{Aqg^3}{3}\left(\frac{1}{a_1^2q^2 g_1^2}+\frac{1}{a_2^2q^2 g_2^2}+\frac{1}{a_3^2 g_3^2q^2}\right),\\
	[x^{Aqn}]S_6(x)&\equiv\Xi\cdot\frac{1}{q^2}\cdot\frac{Aqg^3}{3}\left(\frac{1}{a_1^2q^2 g_1^2}+\frac{1}{a_2^2 g_2^2q^2}+\frac{1}{a_3^2q^2 g_3^2}\right),\\
	[x^{Aqn}]S_7(x)&\equiv\Xi\cdot\frac{1}{q^2}\cdot\frac{Aqg^3}{3}\left(\frac{1}{a_1^2 g_1^2q^2}+\frac{1}{a_2^2q^2 g_2^2}+\frac{1}{a_3^2q^2 g_3^2}\right),\\
	[x^{Aqn}]S_8(x)&\equiv-\Xi\cdot\frac{1}{q^3}\cdot\frac{Aqg^3q^3}{3}\left(\frac{1}{a_1^2q^2 g_1^2q^2}+\frac{1}{a_2^2q^2 g_2^2q^2}+\frac{1}{a_3^2q^2 g_3^2q^2}\right),
	\end{align*}
	all modulo $p^r$. Thus,
	\begin{align*}
	\sum_{\substack{i,j,k\ge 1\\\gcd(ijk,q^sn)=1\\a_1 i+a_2 j+a_3 k=Aq^sn}}\frac{1}{ijk}&\equiv q^{s-1}\cdot [x^{Aqn}]\sum_{j=1}^8 S_j(x)\\
	&\equiv \Xi \cdot \frac{Ag^3}{3}\left(\frac{1}{a_1^2 g_1^2}+\frac{1}{a_2^2 g_2^2}+\frac{1}{a_3^2 g_3^2}\right)\\
	&\quad\times q^{s-1}\cdot\left(q-2-\frac{1}{q^2}+\frac{3}{q^3}-\frac{1}{q^3}\right)\\
	&=\Xi \cdot \frac{Ag^3}{3}\left(\frac{1}{a_1^2 g_1^2}+\frac{1}{a_2^2 g_2^2}+\frac{1}{a_3^2 g_3^2}\right)\\
	&\quad\times q^{s}\cdot\left(1-\frac{2}{q}\right)\left(1-\frac{1}{q^3}\right) \pmod{p^r}.
	\end{align*}
	Inserting the expression of $\Xi$ and recalling \eqref{eq:ind-base}, we arrive at \eqref{eq:induction}.
\end{proof}

\subsection*{Acknowledgements}

The author was supported by a Killam Postdoctoral Fellowship from the Killam Trusts.

\bibliographystyle{amsplain}

\end{document}